\newcommand{\Ln}{[\!|}
\newcommand{\Rn}{|\!]}
\newcommand{\n}[1]{{\Ln{#1}\Rn}}
\newtheorem{theorem}{Theorem}
\newtheorem{lemma}{Lemma}
\newtheorem{proposition}{Proposition}
\newtheorem{remark}{Remark}
\newtheorem{definition}{Definition}
\numberwithin{equation}{section}
\numberwithin{theorem}{section}
\numberwithin{lemma}{section}
\numberwithin{proposition}{section}
\numberwithin{corollary}{section}
\numberwithin{remark}{section}
\numberwithin{definition}{section}
\newcommand{\Div}{\mbox{\rm div}\,}
\newcommand{\be}{\begin{equation}}
\newcommand{\ba}{\begin{array}}
\newcommand{\ea}{\end{array}}
\newcommand{\ee}{\end{equation}}
\newcommand{\Lim}[1]{{\displaystyle \lim_{ #1}}}
\newcommand{\real}{{\mathbb R}}
\newcommand{\eeq}[1]{\label{eq:#1}\end{equation}}
\newcommand{\Eqref}[1]{{\rm (\ref{eq:#1})}}
\newcommand{\Br}{\begin{remark}\begin{rm}}
\newcommand{\ER}[1]{\end{rm}\label{remark:#1}\end{remark}}
\begin{document}
\title{The Transition Problem between Time-Independent Motions\\ of a Body in a Viscous Liquid}
\author{Giovanni P. Galdi and Toshiaki Hishida}
\date{}
\maketitle
\begin{abstract}
A body $\mathscr B$ moves in an unbounded Navier-Stokes liquid by time-independent translatory motion. Suppose that at time $t=0$, $\mathscr B$ smoothly changes its motion to an {\em arbitrary} rigid motion, reached at time $t=1$.
We then show that the associated Navier-Stokes  problem has a unique solution connecting the two steady-states generated by the motion of $\mathscr B$, provided all the involved velocities of $\mathscr B$ are sufficiently small. 
\end{abstract}
\par\noindent
{\small {\bf Mathematics Subject Classification.} 35Q30, 76D05.}\smallskip\par\noindent
{\small {\bf Keywords.} Navier--Stokes flow past a body,  transition problem,  steady flow, attainability, asymptotic behavior in time, Oseen evolution operator.}

\section{Introduction}        
\label{intro}
In 1965, Robert Finn posed the question of providing a rigorous mathematical proof that a (finite) body $\mathscr B$, completely immersed in a viscous Navier-Stokes liquid that fills the
three-dimensional
whole space, $\Omega$, outside $\mathscr B$, can be accelerated from rest to a state of motion characterized by a constant translational velocity $\gamma$ \cite{Finn}. This is the so-called ``starting problem." Basically, it consists in showing 
the convergence of nonstationary motions of the liquid, generated by the acceleration of $\mathscr B$, to the time independent (steady-state) motion  
corresponding to $\gamma$.
\par
The main difficulty in answering  this question is due to the fact that the classical ($L^2$) energy estimate --fundamental in establishing global-in-time results-- is not available in such a  case, because $\Omega$ is an exterior domain. It   suggests, instead, that one should  resort to a suitable  $L^q$ approach, with $q\ge 3$. An extended theory of this kind was successfully obtained by Shibata about thirty years later \cite{Shibata} (see also \cite{KoShi}), which, short after, led to the complete solution of Finn's problem, at least for a ``small" $\gamma$ \cite{GHS97}.\footnote{It is highly unlikely that the problem can be solved for ``large'' $\gamma$, unless one adds, for example, appropriate control forces on the body.}
\par
Over time, the results in \cite{GHS97} have been improved and generalized in several respects; see, e.g., \cite{GH21,HM,Saz,Ta21,Ta22,Ta24}. Of particular relevance to our investigation is the question addressed by Sazonov in \cite{Saz}. Specifically, he is interested in the more general situation where the initial state of $\mathscr B$ is not necessarily rest, but a generic translatory motion with velocity $\gamma_0$  (say). Sazonov refers to this as  ``transition problem." In \cite[Theorems 3.1 and 3.3]{Saz}, he is able to solve this problem, among others, provided that $\gamma_0$ and $\gamma$ are parallel,  $|\gamma-\gamma_0|$  is ``small" and the steady-state corresponding to $\gamma$ is stable, in the sense of spectral theory. However, the crucial point that makes such a result rather unsatisfactory is that it is proved under the further assumption  $\Omega\equiv\mathbb R^3$, that is, in the absence of the body. 
\par
Objective of this paper is to give a positive answer to the transition problem, in a somewhat general formulation. More specifically, we assume that the initial state is generated by $\mathscr B$ that translates with constant velocity $\gamma_1$, while, in the final state, $\mathscr B$ is allowed to perform an {\em arbitrary} rigid motion with velocity ${\sf V}$, characterized by (constant) translation,  $\gamma_2$, and rotation, $\omega_0$. No assumptions are made about the direction of $\gamma_i$, $i=1,2$, and $\omega_0$, but only about their magnitude which is assumed to be less than a given constant. Let $v_i$, $i=1,2$ be the (velocity fields of the) steady-state solutions corresponding to $\gamma_1$ and ${\sf V}$, respectively. We then show that, in a suitable function class, there exists a unique solution whose velocity field coincides with $v_1$ at time $t=0$ and, as $t\to\infty$, converges to $v_2$, with a distinct order of decay.
\par
The starting point of our analysis is to rewrite the original set of equations as a single integral equation in an appropriate function space (weak-$L^3$ space); see \eqref{IE}--\eqref{reduced}. This is done  employing the evolution operator $T=T(t,s)$ constructed in \cite{HR14}. It is then easy to recognize that, formally, the transition problem is equivalent to finding sufficient conditions that guarantee that the solutions to this integral equation converge to 0 as $t\to\infty$. To make this argument rigorous, we begin to give a precise definition of global (in time) solution (see Definition \ref{def-sol}). Successively, in Theorem \ref{main}, we prove that,  if the data $\gamma_i$, $i=1,2,$ and $\omega_0$ are below a certain constant involving the initial acceleration of $\mathscr B$, then  solutions exist, are  unique, depend continuously upon the data and decay to 0, as $t\to\infty$ in different norms, with corresponding (algebraic) decay rate; see \eqref{decay}. The proof of this result is obtained by the contraction mapping theorem. For its success, we need, basically, three  fundamental estimates: the first one, regarding the ``forcing term" $g$, the second one involving linear terms with coefficients depending on $v_2$, and the third one  related to the nonlinear term. The evaluation of the first two terms   can be deduced from the known results on the
large time behavior of the evolution
operator $T(t,s)$ proved in
{\color{black}
\cite{GH21,Hi18, Hi20}
}
and recalled in
{\color{black}
Propositions \ref{prop-evo1} and \ref{prop-evo2},
}
provided $g$ and $v_2$ meet appropriate global summability properties  along with corresponding estimates with regard to the data; see Lemmas \ref{lem-Psi} and \ref{lem-top}. Such properties are indeed shown in Lemma  \ref{lem-force} under the given assumption, namely, that $\mathscr B$ goes  from an initial  translatory motion to an arbitrary rigid motion. It is worth emphasizing that similar estimates are not known if, in the initial state, the angular velocity of $\mathscr B$ is not zero, in which case the transition problem remains open;
see Remark \ref{rem-force}.
We wish also to remark that the summability properties of $g$ and their dependence on the data may as well affect the asymptotic behavior in time of the solution; see  Remark \ref{rem-1}. The estimate of the term in the integral equation involving the nonlinearity (see \eqref{Psi}) is performed in Lemma \ref{lem-Psi}, also with the help of the results reported in Propositions \ref{prop-evo1} and  \ref{prop-evo2}. Our objective here is twofold. On the one hand, to show quadratic bounds, in suitable norms, that allow the use of a contraction-mapping argument. On the other hand, to prove, again in suitable norms, algebraic  decay estimates in time. Finally, with the help of Lemma \ref{lem-uni} we provide the uniqueness property.
\par          
The paper is organized as follows. In Section \ref{formulation} we furnish the precise mathematical formulation of our transition problem. In the following Section \ref{result}, after introducing some standard notation, we show that the problem can be equivalently rewritten as an integral equation. We thus give the definition of a solution to this equation  and state our main results in Theorem \ref{main}. We also furnish a few remarks about this theorem and some of its consequences. Then, In Section \ref{pre}, we  collect several preparatory results, mostly, from \cite{GH21,Hi18,Hi20}, and,  in the last Section \ref{proof}, we give the proof of Theorem \ref{main}.

\section{Formulation of the Problem}\label{formulation}
Consider a rigid body $\mathscr B$  (compact subset of $\mathbb R^3$) moving in a Navier-Stokes liquid, $\mathscr L$, that fills the entire space, $\Omega$, outside $\mathscr B$. Up to time $t=0$ (say) $\mathscr B$ moves with a time-independent translational motion characterized by the constant velocity $\gamma_1$, while the flow of $\mathscr L$, referred to a body-fixed frame, $\mathcal F$, is steady and characterized by velocity and pressure fields $v_1$ and $p_1$, respectively.  Successively, in the time interval
$(0,1)$,
$\mathscr B$ performs an appropriate and given rigid motion such that, at time $t_1$, its motion is still time-independent but arbitrary, and described by (constant) translational velocity $\gamma_2$ (in general, $\neq\gamma_1$) and angular velocity $\omega_0$. We denote by $v_2$ and $p_2$ velocity and pressure fields of  a steady flow of $\mathscr L$ corresponding to
$\gamma_2+\omega_0\times x$. 
The problem we want to investigate is whether the unsteady flow of $\mathscr L$, generated in the time interval 
$(0,1)$,
will converge to such a steady flow in the limit $t\to\infty$.   
\par
In order to formulate this problem in a precise mathematical way, we begin to observe that ${\sf s}_i:=(v_i,p_i)$, $i=1,2$, solve the following set of equations
$$\begin{array}{cc}\medskip\left.\begin{array}{ll}\medskip
\Delta v_1 +\gamma_1\cdot\nabla v_1=v_1\cdot\nabla v_1+\nabla p_1\\
\Div v_1=0\end{array}\right\}\ \ \mbox{in $\Omega$}\\
v_1(x)=\gamma_1\ \ \mbox{at $\partial\Omega$}\,;\ \ \displaystyle{\lim_{|x|\to\infty} v_1(x)=0}\,,
\end{array}
$$
and
\be\ba{cc}\medskip\left.\ba{ll}\medskip
\Delta v_2 +{\sf V}\cdot\nabla v_2-\omega_0\times v_2=v_2\cdot\nabla v_2+\nabla p_2\\
\Div v_2=0\ea\right\}\ \ \mbox{in $\Omega$}\\
v_2(x)={\sf V}\ \ \mbox{at $\partial\Omega$}\,;\ \ \Lim{|x|\to\infty} v_2(x)=0\,,
\ea
\eeq{1}
where
$$
{\sf V}=\gamma_2+\omega_0\times x\,.
$$
Next, let $\gamma=\gamma(t)$ and $\omega=\omega(t)$ be the prescribed  translational and angular velocity of $\mathscr B$ for $t>0$. By what we said above, they must satisfy
\be
\gamma(0)=\gamma_1\,,\ \ \omega(0)=0\,;\ \ \gamma(t)=\gamma_2\,,\ \ \omega(t)=\omega_0\,,\ \ \mbox{for all 
{\color{black}
$t\ge 1$
}}. 
\eeq{2}
Thus, with ${V}(x,t):=\gamma(t) +\omega(t)\times x$, the equations of motion of $\mathscr L$ in $\mathcal F$ read as follows
\be\ba{cc}\medskip\left.\ba{ll}\medskip
\partial_t v+( v-{V})\cdot\nabla v+\omega\times v=\Delta v -\nabla p\\
\Div v=0\ea\right\}\ \ \mbox{in $\Omega\times(0,\infty)$}\\ \medskip
 v(x,t)={V}(x,t)\ \ \mbox{at $\partial\Omega\times(0,\infty)$}\,;\ \ \Lim{|x|\to\infty} v(x,t)=0\,,\\
 v(x,0)= v_1(x)\,.
\ea
\eeq{3}
The transition problem can be then formulated as follows: {\it Find a solution $(v,p)$ to \Eqref{2}--\Eqref{3} such that, in appropriate norm,}
\be
\lim_{t\to\infty}v(x,t)= v_2(x)\,,\ \ x\in\Omega\,.
\eeq{4}
\par
In order to solve this problem, we have to specify the way in which the transition ${\sf s}_1\to{\sf s}_2$ occurs, namely, furnish an explicit realization of conditions \Eqref{2}. To this end, let $\psi=\psi(t)$, $t\ge 0$, be a smooth non-decreasing function such that $\psi(0)=0$ and $\psi(t)=1$, for all $t\ge 1$, and 
{\color{black}
we have the case
}
$\omega_0\in\real^3\backslash\{0\}$
in mind (although the other case $\omega_0=0$ is not excluded in our main result, see Remark \ref{rem-6}).
We then set
\be
\gamma(t)=(1-\psi(t))\gamma_1+\psi(t)\gamma_2\,,\ \ \omega(t)=\psi(t)\omega_0
\,,
\eeq{5}
and look for a solution to \Eqref{3}--\Eqref{5} of the form
$$
v(x,t)=u(x,t)+\psi(t) v_2(x)\,,\ \ p(x,t)={\sf p}(x,t)+\psi(t)p_2(t)\,.
$$
From \Eqref{3}, \Eqref{4}, and \Eqref{1}, we deduce that the pair $( u,{\sf p})$ solves the following problem
\be\ba{cc}\medskip\left.\ba{ll}\medskip
\partial_t u+( u-{V})\cdot\nabla u+\omega\times u+\psi\big( v_2\cdot\nabla u+ u\cdot\nabla v_2\big)=\Delta u -\nabla {\sf p}+f\\
\Div u
=0\ea\right\}\ \ \mbox{in $\Omega\times(0,\infty)$}\\ \medskip
u(x,t)=(1-\psi(t))\gamma_1\ \ \mbox{at $\partial\Omega\times(0,\infty)$}\,;\ \ \Lim{|x|\to\infty} u(x,t)=\Lim{t\to\infty} u(x,t)=0\,,\\
u(x,0)= v_1(x)\,,
\ea
\eeq{6}
where
\be
f:=
-\psi^\prime v_2+(\psi-\psi^2)\big[(v_2+\gamma_1-\gamma_2-\omega_0\times x)\cdot\nabla v_2+\omega_0\times v_2\big]\,.
\eeq{7}
The transition problem reduces then to find a solution to \Eqref{6}--\Eqref{7}.
\Br Finn's ``starting problem" is a particular case of the ``transition problem," obtained by setting in \Eqref{6}--\Eqref{7} $\gamma_1\equiv
\omega_0\equiv0$.
\ER{1}
\Br Of course, \Eqref{5}  is only a possible choice of the way in which   the transition may occur, even though rather reasonable.
\ER{2}

\section{Statement of the main result}
\label{result}

Let us begin with introducing notation.
Given two vector fields $v$ and $w$, we denote by $v\otimes w$ the matrix $(v_iw_j)$.
Let $A=(A_{ij}(x))$ be a $3\times 3$ matrix valued function, then the vector field $\mbox{div $A$}$ is defined by
$(\mbox{div $A$})_i=\sum_j\partial_jA_{ij}$.
By following this rule, $\mbox{div $w$}=0$ implies that $w\cdot\nabla v=\mbox{div $(v\otimes w)$}$.

Let $\Omega$ be an exterior domain in $\mathbb R^3$ with $C^2$-boundary $\partial\Omega$ satisfying
\begin{equation}
\mathbb R^3\setminus\Omega \subset B_1,
\label{body}
\end{equation}
where $B_R$ denotes the open ball centered at the origin with radius $R>0$.
Given $q\in [1,\infty]$
and a domain $G\subset \mathbb R^3$,
the norm of the Lebesgue space $L^q(G)$ is denoted by $\|\cdot\|_{q,G}$.
We abbreviate $\|\cdot\|_q=\|\cdot\|_{q,\Omega}$ for the exterior domain $\Omega$ under consideration.
Given an integer $k>0$,  $W^{k,q}(\Omega)$ stands for the standard $L^q$-Sobolev space
with the norm $\|\cdot\|_{k,q}$.
The class $C_0^\infty(\Omega)$ consists of all $C^\infty$ functions with compact support in $\Omega$,
then $W^{k,q}_0(\Omega)$ denotes the completion of $C_0^\infty(\Omega)$ in $W^{k,q}(\Omega)$. 

Let us also introduce the Lorentz spaces which should be defined as Banach spaces by using the average function of the 
nonincreasing rearrangement, see \cite{BL} for details.
For simplicity, in this paper, we define those spaces just by
\[
L^{q,r}(\Omega)=\big(L^1(\Omega), L^\infty(\Omega)\big)_{\theta,r} \qquad\mbox{with}\quad \theta=1-\frac{1}{q}
\]
for $q\in (1,\infty)$ and $r\in [1,\infty]$ via the real interpolation functor $(\cdot,\cdot)_{\theta,r}$.
Then the reiteration theorem in the interpolation theory implies that
\begin{equation}
L^{q,r}(\Omega)=\big(L^{q_0}(\Omega), L^{q_1}(\Omega)\big)_{\theta,r}
\label{lorentz}
\end{equation}
whenever
\begin{equation}
1<q_0<q<q_1<\infty, \qquad
\frac{1}{q}=\frac{1-\theta}{q_0}+\frac{\theta}{q_1}, \qquad
r\in [1,\infty].
\label{inter}
\end{equation}
We denote by $\|\cdot\|_{(q,r)}$ the norm of the Lorentz space $L^{q,r}(\Omega)$.
Notice that $L^{q,q}(\Omega)=L^q(\Omega)$.
We have the duality relation
$L^{q^\prime,r^\prime}(\Omega)=L^{q,r}(\Omega)^*$ provided that
\begin{equation}
q\in (1,\infty), \qquad
r\in [1,\infty), \qquad
\frac{1}{q^\prime}+\frac{1}{q}=1, \qquad
\frac{1}{r^\prime}+\frac{1}{r}=1.
\label{duality}
\end{equation}
By $\langle\cdot,\cdot\rangle$ we denote various duality pairings over the exterior domain $\Omega$.

To describe the class of steady state, we need the homogeneous $L^2$-Sobolev space 
$D^{k,2}(\Omega)$ consisting of all functions $u\in L^1_{\rm loc}(\Omega)$ satisfying $\nabla^ku\in L^2(\Omega)$,
where $k=1,2$.
Here and in what follows we adopt the same symbol for denoting scalar, vector and tensor function spaces
as long as there is no confusion.

Let us introduce the solenoidal function space.
The class $C^\infty_{0,\sigma}(\Omega)$ consists of all divergence-free vector fields being in $C_0^\infty(\Omega)$.
Let $q\in (1,\infty)$.
We denote by $L^q_\sigma(\Omega)$ the completion of $C^\infty_{0,\sigma}(\Omega)$ in $L^q(\Omega)$.
The space of $L^q$-vector fields admits the Helmholtz decomposition
\[
L^q(\Omega)=L^q_\sigma(\Omega)\oplus \{\nabla p\in L^q(\Omega); p\in L^q_{\rm loc}(\overline{\Omega})\},
\]
see Miyakawa \cite{Mi82} and Simader and Sohr \cite{SiSo}.
By $P=P_q: L^q(\Omega)\to L^q_\sigma(\Omega)$ we denote the Fujita-Kato projection associated with the Helmholtz decomposition
above.
By \eqref{lorentz} the projection $P_q$ extends to a bounded operator $P_{q,r}$ on the Lorentz space $L^{q,r}(\Omega)$
for every $ q\in (1,\infty)$ and $r\in [1,\infty]$.
Following Borchers and Miyakawa \cite{BoMi}, let us define the solenoidal Lorentz space by
$L^{q,r}_\sigma(\Omega):=R(P_{q,r})$, that is the range of $P_{q,r}$.
Then it is characterized as
\[
L^{q,r}_\sigma(\Omega)=\{u\in L^{q,r}(\Omega); \mbox{div $u$}=0,\, \nu\cdot u|_{\partial\Omega}=0\},
\]
where $\nu$ stands for the outer unit normal to $\partial\Omega$.
We have also
the duality relation $L^{q^\prime,r^\prime}_\sigma(\Omega)=L^{q,r}_\sigma(\Omega)^*$ for $(q,r)$ with the condition \eqref{duality}.
Moreover, 
\begin{equation}
L^{q,r}_\sigma(\Omega)=\big(L^{q_0}_\sigma(\Omega), L^{q_1}_\sigma(\Omega)\big)_{\theta,r}
\label{lorentz2}
\end{equation}
provided that \eqref{inter} is satisfied.
See \cite[Theorems 5.2 and 5.4]{BoMi} for these results.
Finally, we denote several positive constants by $C$, which may change from line to line.

Let $q\in (1,\infty)$, then we introduce the linear operator
\begin{equation}
\left\{
\begin{array}{l}
D(L(t))=\{u\in W^{2,q}(\Omega)\cap W^{1,q}_0(\Omega)\cap L^q_\sigma(\Omega); (\omega_0\times x)\cdot\nabla u\in L^q(\Omega)\}, \\
L(t)u=-P[\Delta u+V(\cdot,t)\cdot\nabla u-\omega(t)\times u],
\end{array}
\right.
\label{linerized}
\end{equation}
where, we recall,
\begin{equation}
\begin{split}
&V(x,t)=\gamma(t)+\omega(t)\times x, \\
&\gamma(t)=(1-\psi(t))\gamma_1+\psi(t)\gamma_2, \qquad
\omega(t)=\psi(t)\omega_0,
\end{split}
\label{rigid}
\end{equation}
with constant vectors $\gamma_1, \gamma_2, \omega_0\in\mathbb R^3$
and
\begin{equation}
\psi\in C^1(\mathbb R; [0,1]), \qquad \psi(t)=0\quad (t\leq 0), \qquad \psi(t)=1\quad (t\geq 1).
\label{trans}
\end{equation}
We know from Hansel and Rhandi \cite{HR14} that the family $\{L(t); t\geq 0\}$ generates an evolution operator
$\{T(t,s); t\geq s\geq 0\}$ on the space $L^q_\sigma(\Omega)$ for every $q\in (1,\infty)$.
They also derived the
$L^q$-$L^r$ smoothing estimates of $\nabla^j T(t,s)$ for $j=0,1$. 
Since the issue of the present paper is the large time behavior, what we need
is the $L^q$-$L^r$ decay estimates, see \cite{Hi18, Hi20}, 
for its adjoint $T(t,s)^*$ as well as $T(t,s)$.
We recall some of those estimates 
in Propositions \ref{prop-evo1} and \ref{prop-evo2} below.

As a lifting function of the inhomogeneous boundary value of $u(x,t)$, one can take
\begin{equation}
b(x,t)=(1-\psi(t))\,\mbox{rot}\left(\phi(x)\frac{\gamma_1\times x}{2}\right)
\label{lift}
\end{equation}
with $\phi\in C_0^\infty(B_2)$ being fixed such that $\phi(x)=1$ in $B_1$.
Indeed, in view of \eqref{body},
we have
\[
b|_{\partial\Omega}=(1-\psi(t))\gamma_1, \qquad \mbox{div $b$}=0.
\]
Let us look for a solution $u(x,t)$ to \Eqref{6} of the form
\[
u(x,t)=w(x,t)+b(x,t)
\]
and, therefore,
\be
v(x,t)=w(x,t)+b(x,t)+\psi(t)v_2(x).
\eeq{vsol}
By use of the evolution operator $T(t,s)$, our problem 
$\lim_{t\to\infty}v(t)=v_2$
is reduced to the deduction of
the large time decay of $w(t)$ obeying the integral equation
\begin{equation}
w(t)=T(t,0)v_0+\int_0^tT(t,s)Pg(s)\,ds+\int_0^tT(t,s)P\mbox{div $(Fw)$}(s)\,ds,
\label{IE}
\end{equation}
where
\begin{equation}
\begin{split}
&v_0=v_1-b(\cdot,0), \\
&g=f-\partial_tb-b\cdot\nabla b+\Delta b+V\cdot\nabla b-\omega\times b-\psi(v_2\cdot\nabla b+b\cdot\nabla v_2), \\
&Fw=w\otimes w+w\otimes (b+\psi v_2)+(b+\psi v_2)\otimes w\,,
\end{split}
\label{reduced}
\end{equation}
and $f$ is defined in \Eqref{7}.
Suitable smallness conditions on $\gamma_1$ and on $(\gamma_2,\omega_0)$ allow us to show the
existence of a unique steady state $v_1$ and $v_2$, respectively, with their fine properties.
See the assumption of Lemma \ref{lem-top} below.
In what follows we often use
\begin{equation}
\|v_0\|_3\leq \|v_1\|_3+C|\gamma_1|\leq C|\gamma_1|^{1/2},
\label{data-1}
\end{equation}
\begin{equation}
\sup_{0\leq t\leq 1}\|g(t)\|_{(3,\infty)}\leq C(1+|\psi^\prime|_0)
\big(|\gamma_1|^{1/2}+|\gamma_2|+|\omega_0|\big),
\label{data-2}
\end{equation}
\begin{equation}
\|b+\psi v_2\|_{(r,\infty)}\leq C_r\big(|\gamma_1|^{1/2}+|\gamma_2|+|\omega_0|\big), \qquad r\in [3,\infty],
\label{data-3}
\end{equation}
\begin{equation}
\|\nabla (b+\psi v_2)\|_r\leq C_r\big(|\gamma_1|^{1/2}+|\gamma_2|+|\omega_0|\big), \qquad r\in [2,6],
\label{data-4}
\end{equation}
where
\begin{equation}
|\psi^\prime|_0:=\sup_{0\leq t\leq 1}\left|\frac{d\psi}{dt}(t)\right|,
\label{size-psi}
\end{equation}
and $\|\cdot\|_{(\infty,\infty)}=\|\cdot\|_\infty$.
In \eqref{data-1}, estimate of the steady state $v_1$ with wake structure was studied by several authors; among them, 
Takahashi \cite[Theorem 1.1]{Ta21} made use of $L^q$-estimates of the Oseen system developed by 
the first author \cite{G92, G-b} to deduce
$\|v_1\|_3\leq C|\gamma_1|^{1/2}$ for small $|\gamma_1|$, where the power $\frac{1}{2}$
is determined by the linear theory.
One may assume $|\gamma_1|\leq 1$, so that $|\gamma_1|$ is replaced by $|\gamma_1|^{1/2}$ 
in \eqref{data-1}--\eqref{data-4}
for simplicity.
Estimate \eqref{data-2} follows from Lemma \ref{lem-force}, that we will show in the next section
under the smallness of $|\gamma_2|+|\omega_0|$, together with
\[
\|v_2\cdot\nabla b+b\cdot\nabla v_2\|_{(3,\infty)}
\leq \|v_2\|_{(3,\infty)}\|\nabla b\|_\infty+\|b\|_6\|\nabla v_2\|_6
\leq C|\gamma_1|\big(|\gamma_2|+|\omega_0|\big).
\]
Recall that $g(t)=0$ for $t\geq 1$, however, 
$g(t)\notin L^3(\Omega)$ for $t<1$ in general when $\gamma_2\cdot \omega_0=0$
{\color{black}
and $\omega_0\neq 0$.
}

Since $(s,\infty)\ni t\mapsto T(t,s)h$ with values in $L^{3,\infty}_\sigma(\Omega)$ is continuous only in the weak-$*$ sense 
when $h\in L^{3,\infty}_\sigma(\Omega)$,
the regularity of the second term of the right-hand side of \eqref{IE} that one could expect is \eqref{sol-cl} below,
see \eqref{top-conti-1} and Remark \ref{rem-conti} in section \ref{proof}.
Having this in mind,
we introduce the definition of solutions to \eqref{IE}.
\begin{definition}
We say that $w(t)$ is a global solution to \eqref{IE} if

\begin{enumerate}
\item
it is of class
\begin{equation}
w\in C_{w^*}((0,\infty); L^{3,\infty}_\sigma(\Omega))
\label{sol-cl}
\end{equation}
with the initial condition
\begin{equation}
\lim_{t\to 0}\|w(t)-v_0\|_{(3,\infty)}=0,
\label{IC}
\end{equation}

\item
the third term on the right-hand side of \eqref{IE} is Bochner integrable in $L^{3,\infty}_\sigma(\Omega)$,

\item
\eqref{IE} is satisfied in $L^{3,\infty}_\sigma(\Omega)$ for every $t>0$.
\end{enumerate}
\label{def-sol}
\end{definition}

In order to ensure 
the Bochner integrability in the item 2 of Definition \ref{def-sol}, one needs a bit more regularity than \eqref{sol-cl}.
If, for instance, $w$ is assumed to belong to the auxiliary space $w\in L^\infty_{\rm loc}(0,\infty; L^{q,\infty}_\sigma(\Omega))$
for some $q\in (3,\infty)$ with 
$\|w(t)\|_{(q,\infty)}
=O(t^{-1/2+3/2q})$ as $t\to 0$,
then the item 1 of Lemma \ref{lem-Psi} tells us that
the third term of the right-hand side of \eqref{IE} is indeed Bochner integrable even in $L^3_\sigma(\Omega)$.

The reason why one may expect the strong convergence
\eqref{IC} is that $v_0\in L^3_\sigma(\Omega)$ due to the wake structure of $v_1$.
If we started from a steady flow $v_1$ corresponding to a purely rotation, then 
the vector field $v_0\in L^{3,\infty}_\sigma(\Omega)$ given by \eqref{reduced} would 
yield merely the weakly-$*$ convergence instead of \eqref{IC},
see also Remark \ref{rem-force}.

The main result now reads as follows.
\begin{theorem}
Let $\gamma_1, \gamma_2, \omega_0\in \mathbb R^3$, and set
\begin{equation}
D:=
|\gamma_1|^{1/2}+|\gamma_2|+|\omega_0|.
\label{D}
\end{equation}
For every $\varepsilon\in (0,\frac{1}{4})$, there is a constant $\delta=\delta(\varepsilon,\Omega)\in (0,1]$
independent of $|\psi^\prime|_0$ such that if
\begin{equation}
D\leq \frac{\delta}{1+|\psi^\prime|_0},
\label{small}
\end{equation}
where $|\psi^\prime|_0$ is given by \eqref{size-psi}, see also \eqref{trans},
then problem \eqref{IE} admits a global solution $w(t)$ of class
\begin{equation}
w\in C((0,\infty); L^r_\sigma(\Omega)), \qquad 
w\in C_{w^*}((0,\infty); L^\infty(\Omega))
\label{sol-cl2}
\end{equation}
\begin{equation}
\nabla w\in C_{w^*}((0,\infty); L^{3,\infty}(\Omega))
\label{sol-cl3}
\end{equation}
for all $r\in (3,\infty)$
as well as \eqref{sol-cl}, which enjoys
\begin{equation}
\|w(t)\|_{(3,\infty)}\leq C(1+|\psi^\prime|_0)D 
\label{bdd}
\end{equation}
for all $t\geq 0$ with some constant $C=C(\varepsilon,\Omega)>0$ and
\begin{equation}
\begin{split}
&\|w(t)\|_r=\left\{
\begin{array}{ll}
O(t^{-1/2+3/2r}), \qquad &r\in (3,q_0), \\
O(t^{-1/2+\varepsilon}\log t), &r=q_0, \\
O(t^{-1/2+\varepsilon}), &r\in (q_0,\infty],
\end{array}
\right.  \\
&\|w(t)\|_{(q_0,\infty)}
+\|\nabla w(t)\|_{(3,\infty)}
=O(t^{-1/2+\varepsilon}),
\end{split}
\label{decay}
\end{equation}
as $t\to \infty$, where $q_0:=\frac{3}{2\varepsilon}$.

Suppose that $\widetilde w(t)$ is another global solution to \eqref{IE} of class
\[
\widetilde w\in L^\infty_{\rm loc}(0,\infty; L^{r,\infty}_\sigma(\Omega))
\]
with some $r\in (3,\infty)$. 
Then there is a constant $\widetilde\delta=\widetilde\delta(\Omega)\in (0,1]$
independent of $r,\,\varepsilon$
and $|\psi^\prime|_0$ such that if
\begin{equation}
D\leq\frac{\widetilde\delta}{1+|\psi^\prime|_0}
\label{small2}
\end{equation}
as well as \eqref{small}, then
$\widetilde w(t)$ coincides with
the solution $w(t)$ obtained above. 
\label{main}
\end{theorem}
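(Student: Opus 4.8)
The plan is to set up a complete metric space tailored to the decay profile \eqref{decay} and run a contraction argument on the map $\Phi w := T(t,0)v_0 + \int_0^t T(t,s)Pg(s)\,ds + \int_0^t T(t,s)P\,\mathrm{div}\,(Fw)(s)\,ds$. First I would fix $\varepsilon\in(0,\tfrac14)$, put $q_0=\tfrac{3}{2\varepsilon}$, and define the solution space $X$ as the set of $w$ lying in the intersection of $C_{w^*}((0,\infty);L^{3,\infty}_\sigma)$ and $L^\infty_{\mathrm{loc}}(0,\infty;L^{q,\infty}_\sigma)$ for a fixed auxiliary $q\in(3,q_0)$, equipped with a norm $\|w\|_X$ that is the supremum over $t>0$ of a weighted combination of $\|w(t)\|_{(3,\infty)}$, $t^{1/2-3/2r}\|w(t)\|_r$ (for $r$ in the three regimes of \eqref{decay}, with the logarithmic correction at $r=q_0$), $t^{1/2-\varepsilon}\|w(t)\|_{(q_0,\infty)}$, $t^{1/2-\varepsilon}\|\nabla w(t)\|_{(3,\infty)}$, together with the small-time weights $t^{1/2-3/2q}\|w(t)\|_{(q,\infty)}$ needed to make the Bochner integral in item 2 of Definition \ref{def-sol} converge. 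The closed ball $B_\rho\subset X$ of radius $\rho\sim(1+|\psi'|_0)D$ will be the domain of the contraction.

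The linear pieces are handled by the preparatory lemmas: $T(t,0)v_0$ and $\int_0^t T(t,s)Pg(s)\,ds$ are controlled in $\|\cdot\|_X$ by Lemma \ref{lem-top} (invoking Propositions \ref{prop-evo1} and \ref{prop-evo2} for the $L^q$-$L^r$ decay of $T(t,s)$ and its adjoint) together with the data bounds \eqref{data-1}, \eqref{data-2} and the summability of $g$ from Lemma \ref{lem-force}; this yields $\|T(\cdot,0)v_0 + \int_0^\cdot T(\cdot,s)Pg(s)\,ds\|_X \le C(1+|\psi'|_0)D$. The term $\int_0^t T(t,s)P\,\mathrm{div}\,(Fw)(s)\,ds$ is the genuinely nonlinear part, and here I would use Lemma \ref{lem-Psi}: recalling $Fw = w\otimes w + w\otimes(b+\psi v_2) + (b+\psi v_2)\otimes w$, the estimates \eqref{data-3}, \eqref{data-4} give $\|b+\psi v_2\|_{(r,\infty)}\le C_r D$, so the "linear-in-$w$" half of $Fw$ contributes a bound $\le C D\,\|w\|_X$ and the quadratic half contributes $\le C\|w\|_X^2$. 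Combining, $\|\Phi w\|_X \le C(1+|\psi'|_0)D + CD\|w\|_X + C\|w\|_X^2$, and the analogous Lipschitz estimate $\|\Phi w - \Phi\widetilde w\|_X \le (CD + C(\|w\|_X+\|\widetilde w\|_X))\|w-\widetilde w\|_X$ follows from bilinearity of $F$. Choosing $\delta$ small enough (depending on $\varepsilon,\Omega$ but not on $|\psi'|_0$, since \eqref{small} already absorbs the $|\psi'|_0$ factor into $D$) forces $\Phi$ to map $B_\rho$ into itself and to be a contraction there; the fixed point $w$ satisfies \eqref{IE}, the bound \eqref{bdd}, and the decay \eqref{decay} by construction of $\|\cdot\|_X$.

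The remaining structural claims are then read off: membership in $C((0,\infty);L^r_\sigma)$ for $r\in(3,\infty)$ and the weak-$*$ continuity statements \eqref{sol-cl2}--\eqref{sol-cl3} follow from the smoothing properties of $T(t,s)$ and $\nabla T(t,s)$ applied to each term of \eqref{IE} once $w$ is known to have the auxiliary regularity — the first term uses strong continuity of $t\mapsto T(t,0)v_0$ away from $t=0$, the $g$-term uses \eqref{top-conti-1}, and the nonlinear term uses item 1 of Lemma \ref{lem-Psi}; the strong initial condition \eqref{IC} uses $v_0\in L^3_\sigma(\Omega)$ (the wake structure of $v_1$) plus the fact that the two integral terms vanish in $L^{3,\infty}$ as $t\to0$ at rate controlled by the small-time weights. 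For uniqueness, given another solution $\widetilde w\in L^\infty_{\mathrm{loc}}(0,\infty;L^{r,\infty}_\sigma)$ with $r\in(3,\infty)$, I would first upgrade $\widetilde w$ on $(0,T_0]$ for small $T_0$: by Lemma \ref{lem-uni} the difference $w-\widetilde w$ satisfies a Volterra-type integral inequality whose kernel is integrable near $s=t$, so a Gronwall/iteration argument on a short interval forces $w\equiv\widetilde w$ on $(0,T_0]$ once $D\le\widetilde\delta/(1+|\psi'|_0)$; then a standard continuation covers all of $(0,\infty)$.

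The main obstacle I expect is the sharp bookkeeping at the borderline exponent $r=q_0=\tfrac{3}{2\varepsilon}$, where the time-integral $\int_0^t(t-s)^{-\alpha}s^{-\beta}\,ds$ governing the $T(t,s)P\,\mathrm{div}$ estimate is exactly critical and produces the logarithmic loss $t^{-1/2+\varepsilon}\log t$ in \eqref{decay}; one must choose the auxiliary exponents ($q$ below $q_0$, and the pairing exponent used when invoking the decay of $T(t,s)^*$ via duality in $L^{3,\infty}$) so that every other convolution stays strictly subcritical and only this one term degenerates logarithmically, while simultaneously keeping all small-time weights integrable so that item 2 of Definition \ref{def-sol} holds and \eqref{IC} is genuinely strong. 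Verifying that the single constant $\delta$ can be taken independent of $|\psi'|_0$ — which is what makes \eqref{small} the natural smallness condition — also requires care: $|\psi'|_0$ enters only through $\|g\|_{(3,\infty)}$ on $[0,1]$ via \eqref{data-2}, and one checks that this is precisely the $(1+|\psi'|_0)D$ appearing on the right of \eqref{bdd}, so it never multiplies the quadratic or the $D\|w\|_X$ terms that fix $\delta$.
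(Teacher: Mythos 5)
Your proposal is correct and follows essentially the same route as the paper: the same decomposition of \eqref{IE} into $w_0+\Psi w$, a contraction argument in a time-weighted Lorentz-norm space driven by Lemmas \ref{lem-Psi} and \ref{lem-top} (hence by Propositions \ref{prop-evo1} and \ref{prop-evo2}), and uniqueness via Lemma \ref{lem-uni}. The only, harmless, difference is that you build the entire decay profile \eqref{decay} --- including the $L^\infty$ and log-corrected $L^{q_0}$ rates --- into the contraction norm, whereas the paper contracts in the minimal norm $[w]_3+[w]_{q}+\{\nabla w\}_\varepsilon$ with the single exponent $q=q_0=\frac{3}{2\varepsilon}>6$ and reads off the remaining rates a posteriori from items 4 and 5 of Lemma \ref{lem-Psi}.
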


Several remarks are in order.
\begin{remark}
If, in particular, $\gamma_2\cdot\omega_0\neq 0$, then a small steady state $v_2\in L^q(\Omega)$ with $q>2$ 
having the wake structure is available
through the Mozzi-Chasles transform \cite{G-b, GS07}, see Galdi and Silvestre \cite{GS07-k}.
In this case, as in the paper \cite[Theorem 1.2]{Ta21} by Tomoki Takahashi on Finn's starting problem,
see also \cite[Theorem 2.13]{Ta24},
it is possible to deduce better decay
$\|w(t)\|_\infty=O(t^{-3/2q})$ 
under an appropriate smallness conditions on $\|v_2\|_q$ and $\|\nabla v_2\|_r$ with $r>4/3$
as well as $|\gamma_1|$.
However, in view of the $L^q$-theory developed by \cite{F05, GK13},
those norms of $v_2$ can be no longer controlled in terms of $|\gamma_2|+|\omega_0|$.
Thus, the aforementioned large time behavior with faster rate is not included in Theorem \ref{main}.
\label{rem-1}
\end{remark}
\begin{remark}
We would conjecture that $\|w(t)\|_\infty=O(t^{-1/2})$ as $t\to\infty$ in \eqref{decay}, 
which was shown by \cite{HM} on the starting problem
only with translation, however, this desired behavior is still open.
In the context of stability of the steady flow
belonging to $L^{3,\infty}(\Omega)$, the $L^\infty$-decay 
$\|w(t)\|_\infty=O(t^{-1/2+\varepsilon})$ was derived first by Koba \cite{K17}, 
see also \cite[Remark 4.4]{Ta22}.
Later on, his proof has been considerably refined by Takahashi \cite{Ta22} and by the present authors \cite{GH21}, respectively.
Estimates of the adjoint evolution operator $T(t,s)^*P\varphi$ with $\varphi\in C_0^\infty(\Omega)$ 
in terms of $\|\varphi\|_1$ are utilized
in the former paper, while the latter one makes use of the $L^\infty$-estimate of the composite operator
$T(t,s)P\,\mbox{\rm div}$, see Proposition \ref{prop-evo2}.
We follow the latter in this paper.
The rate of $L^{q_0}$-decay 
$\|w(t)\|_{q_0}=O(t^{-1/2+\varepsilon}\log t)$ in \eqref{decay}
was already discovered by \cite[Remark 2.2]{Ta22}.
\label{rem-2}
\end{remark}
\begin{remark}
As in \cite[Subsection 3.2]{Hi22}, 
it is possible to prove further regularity
\begin{equation}
w(t)\in D_r(A^{1/2})\subset W^{1,r}_0(\Omega), \qquad
\nabla w\in C_w((0,\infty); L^r(\Omega)) 
\label{reg}
\end{equation}
for all $t>0$ and $r\in (3,\infty)$
by identifying the global solution obtained in Theorem \ref{main}
with a local solution having the desired regularity
reconstructed in a neighborhood of each time,
where $A$ is the Stokes operator on $L^r_\sigma(\Omega)$ with domain $D_r(A)$.
This argument is due to Kozono and Yamazaki \cite{KY98}. 
The reason why $A^{1/2}$ is used is the verification of 
$w=0$ at $\partial\Omega$ 
in the sense of trace.
Although \eqref{sol-cl2} is also shown together with \eqref{reg} within this reconstruction procedure as in \cite{Hi22, Ta22},
we take the other way in this paper, that is, we will show \eqref{sol-cl2}
as well as \eqref{sol-cl3}
directly. 
In addition, we also deduce the decay property of $\nabla w(t)$ in $L^{3,\infty}(\Omega)$, 
the rate of which is comparable to the one of $w(t)$ in $L^\infty(\Omega)$ (although it is less sharp), see \eqref{decay},
and which is not found in \cite{GH21, HS09, K17, KY98, Ta22} on stability/attainability of basic flows being merely
in the scale-critical space $L^{3,\infty}(\Omega)$.
The same argument as in deduction of the decay of $\|\nabla w(t)\|_{(3,\infty)}$
with the aid of \eqref{evo-est13} below for $r>3$ leads us to the asymptotic behavior
$\|\nabla w(t)\|_r=O(t^{-1/2+\varepsilon})$ as $t\to\infty$ for every $r\in (3,\infty)$.
\label{rem-3}
\end{remark}
\begin{remark}
Since the evolution operator $T(t,s)$ is not of parabolic type in the sense of Tanabe \cite{T}
(unless the rotation is absent), the regularity issue is highly nontrivial.
Asami and the second author \cite{AH} have recently developed the regularity theory of the evolution operator
$T(t,s)$ to establish a new existence theorem for a local solution that is $C^1$ in time with values in 
$L^q_\sigma(\Omega)$ for some $q\in (3,\infty)$ if assuming $\rho w(0)\in L^q(\Omega)$ in addition to 
$w(0)\in L^q_\sigma(\Omega)$ and if the external force is absent, where $\rho(x)=1+|x|$.
In view of this theory, 
such a temporal regularity of the global solution obtained in Theorem \ref{main}
seems unlikely even though $\psi^\prime(t)$ is H\"older continuous additionally to \eqref{trans}.
\label{rem-4}
\end{remark}
\begin{remark}
It is readily seen that the solution is unique in the small uniformly in $t>0$
with values in $L^{3,\infty}_\sigma(\Omega)$.
The latter part of Theorem \ref{main} tells us that those two solutions must coincide with each other even though
the other one $\widetilde w(t)$ is large. 
This was already found by Takahashi \cite[Theorem 2.3]{Ta22} even under less conditions, in which
the temporal continuity does not play any role; 
to be precise, given $r\in (3,\infty)$, 
there is at most one solution, not necessarily the solution in the sense of Definition \ref{def-sol},
to the weak form (as in Yamazaki \cite{Y}) of \eqref{IE} within the class
\[
w\in L^\infty_{\rm loc}([0,\infty); L^{3,\infty}_\sigma(\Omega))\cap L^\infty_{\rm loc}(0,\infty; L^r_\sigma(\Omega))
\]
subject to $\lim_{t\to 0}\|w(t)\|_{3,\infty}=0$, where the case $v_0=0$ (starting problem) is discussed in \cite{Ta22}.
The idea of the proof is traced back to Fujita and Kato \cite{FuK64}.
\label{rem-5}
\end{remark}
\begin{remark}
If, in particular, $\omega_0=0$, then we have even the $L^3$-decay of $w(t)$ with definite rate in Theorem \ref{main}
under the smallness condition on $|\gamma_1|^{1/2}+|\gamma_2|^{1/2}$
thanks to the wake structure of $v_2\in L^q(\Omega)$ with $q>2$ as in \cite[Theorem 2.1]{Ta21}.
This observation
has the following worth noticing consequence. 
Suppose we have a finite number of
 steady-states $v_i(x)$, $i=1,\ldots N$, $N\ge 3$,
each one characterized by (not all identically zero)
 translational velocities $\gamma_i$, $i=1\ldots N$, and zero angular velocity.
Then, under the assumptions that all $\gamma_i$, $i=1\ldots N$, are below a certain constant, Theorem \ref{main}
refined as above
ensures that we can
describe the
transition between all the states $v_i$ in the
 following sense. We start with the transition $v_1(x)\to v_2(x)$. Let us denote by $\hat{v}_1(x,t)$ the unique solution to \Eqref{2}--\Eqref{3} of the type \Eqref{vsol} with $w(x,t)$ as in the above theorem. Thus, 
$\hat{v}_1(\cdot,t)\in L^3(\Omega)$
(better summability than obtained in Theorem \ref{main}),
$t\in (0,\infty)$, and  for sufficiently large $t$ 
it approaches
$v_2$ in $L^3$.
So, there is $t_1$ such that $\hat{v}_1(x,t_1)$ is ``almost"
 $v_2(x)$, within the precision we want. Then,  we can use $\hat{v}_1(\cdot,t_1)$ as initial datum and, again by Theorem \ref{main}
refined as above,
obtain another solution $\hat{v}_2(x,t)$ that converges to $v_3(x)$ as $t\to\infty$. We may then
continue the procedure until we end up with a solution
 $\hat{v}_{N-1}(x,t)$ with initial datum $\hat{v}_{N-2}(x,t_{N-2})$ and converging to $v_N(x)$ as $t\to\infty$.
\label{rem-6}
\end{remark}

\section{Preparatory results}
\label{pre}

$L^q$-$L^r$ decay estimates and their variants of
the evolution operator $T(t,s)$ together with those of the adjoint $T(t,s)^*$ are developed well, see \cite{Hi18, Hi20}
and \cite{GH21} by the authors of the present paper.
Here, there is no need to give all of them.
The only estimates for later use are collected in the following two propositions.
Some of them are not found in \cite{Hi18, Hi20} but follow from estimates there by interpolation quite easily.
The idea to deduce \eqref{adj-int} is due to Yamazaki \cite{Y}.
In view of \eqref{rigid}, the condition \eqref{quan} below implies that
\[
\sup_{t\geq 0}\big(|\gamma(t)|+|\omega(t)|\big)+\sup_{t>s\geq 0}\frac{|\gamma(t)-\gamma(s)|+|\omega(t)-\omega(s)|}{t-s}\leq Cm,
\]
which determines constants of several estimates of the evolution operator.
We note that $m$ can be large to establish the linear theory.
\begin{proposition}
[\cite{Hi18, Hi20}]
Let $m\in (0,\infty)$ and assume
\begin{equation}
(1+|\psi^\prime|_0)D\leq m,
\label{quan}
\end{equation}
where $|\psi^\prime|_0$ and $D$ are respectively given by \eqref{size-psi} and \eqref{D}.

\begin{enumerate}
\item
Let $1<q<r\leq\infty$.
Then there is a constant $C=C(m,q,r,\Omega)>0$ such that
\begin{equation}
\|T(t,s)f\|_{(q,\infty)}+(t-s)^{(3/q-3/r)/2}\|T(t,s)f\|_r
\leq C\|f\|_{(q,\infty)}
\label{evo-est1}
\end{equation}
for all $t>s\geq 0$ and $f\in L^{q,\infty}_\sigma(\Omega)$.
If, in particular, $1<q<r<\infty$, then 
\begin{equation}
\lim_{h\to 0}\|T(t+h,s)f-T(t,s)f\|_r=0
\label{evo-conti}
\end{equation}
for all $t>s\geq 0$ and $f\in L^{q,\infty}_\sigma(\Omega)$.

Let $1<q<r\leq 3$.
Then there is a constant $C=C(m,q,r,\Omega)>0$ such that
\begin{equation}
\|\nabla T(t,s)f\|_r\leq C(t-s)^{-(3/q-3/r)/2-1/2}\|f\|_{(q,\infty)}
\label{evo-est11}
\end{equation}
for all $t>s\geq 0$ and $f\in L^{q,\infty}_\sigma(\Omega)$.

Given $\varepsilon >0$ arbitrarily, there is a constant $C=C(\varepsilon,m,\Omega)>0$ such that
\begin{equation}
\|\nabla T(t,s)f\|_{(3,\infty)}\leq C(t-s)^{-1/2}(1+t-s)^\varepsilon \|f\|_{(3,\infty)}
\label{evo-est12}
\end{equation}
for all $t>s\geq 0$ and $f\in L^{3,\infty}_\sigma(\Omega)$.

\item
Let $1<q\leq r<\infty$.
Then there is a constant $C=C(m,q,r,\Omega)>0$ such that
\begin{equation}
\|T(t,s)^*g\|_{(r,1)}\leq C(t-s)^{-(3/q-3/r)/2}\|g\|_{(q,1)}
\label{evo-est2}
\end{equation}
for all $t>s\geq 0$ and $g\in L^{q,1}_\sigma(\Omega)$.

Let $1<q\leq r\leq 3$.
Then there is a constant $C=C(m,q,r,\Omega)>0$ such that
\begin{equation}
\|\nabla T(t,s)^*g\|_{(r,1)}\leq C(t-s)^{-(3/q-3/r)/2-1/2}\|g\|_{(q,1)}
\label{evo-est3}
\end{equation}
for all $t>s\geq 0$ and $g\in L^{q,1}_\sigma(\Omega)$.
If, in particular, $1/q-1/r=1/3$ as well as $1<q<r\leq 3$, then there is a constant $C=C(m,q,\Omega)>0$ such that
\begin{equation}
\int_0^t\|\nabla T(t,s)^*g\|_{(r,1)}\,ds\leq C\|g\|_{(q,1)}
\label{adj-int}
\end{equation}
for all $t>0$ and $g\in L^{q,1}_\sigma(\Omega)$.
\end{enumerate}
\label{prop-evo1}
\end{proposition}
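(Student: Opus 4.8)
The plan is to treat Proposition \ref{prop-evo1} as a repackaging, into the Lorentz-space scale, of the $L^q$--$L^r$ decay and smoothing estimates for $T(t,s)$ and its adjoint already established in \cite{Hi18, Hi20}. Accordingly, I would first recall from those references the base estimates in genuine Lebesgue spaces, namely $\|T(t,s)f\|_r \le C(t-s)^{-(3/q-3/r)/2}\|f\|_q$ and $\|\nabla T(t,s)f\|_r \le C(t-s)^{-(3/q-3/r)/2-1/2}\|f\|_q$ on the admissible ranges $q\le r$, the uniform boundedness $\|T(t,s)f\|_q \le C\|f\|_q$, and the corresponding bounds for $T(t,s)^*$, all with constants governed through \eqref{quan} by the quantity $m$. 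The remaining work is then: (i) to transfer these to Lorentz spaces by real interpolation via \eqref{lorentz} and \eqref{lorentz2}; (ii) to obtain the adjoint estimates \eqref{evo-est2}--\eqref{evo-est3} by duality through \eqref{duality}; and (iii) to handle separately the two borderline statements \eqref{evo-est12} and \eqref{adj-int}, the last of which I expect to be the crux.

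For the interpolation step, to prove \eqref{evo-est1} I would fix the target exponent $r$ and interpolate the two Lebesgue decay estimates $T(t,s)\colon L^{q_i}_\sigma \to L^r$ for $q_0 < q < q_1 < r$ by the functor $(\cdot,\cdot)_{\theta,\infty}$; since $L^{q,\infty}_\sigma=(L^{q_0}_\sigma, L^{q_1}_\sigma)_{\theta,\infty}$ by \eqref{lorentz2} and the target stays fixed, this yields $L^{q,\infty}_\sigma \to L^r$, and the rate is reproduced exactly because $(t-s)^{-(3/q-3/r)/2}$ is log-linear in $1/q$, so the interpolated constant $M_0^{1-\theta}M_1^\theta$ carries the right power of $(t-s)$. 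The first summand of \eqref{evo-est1}, i.e. the $L^{q,\infty}$-boundedness, comes the same way from the uniform $L^{q_i}$-bounds, and \eqref{evo-est11} is identical with the gradient decay as input. The continuity \eqref{evo-conti} I would deduce from the strong continuity of $s\mapsto T(t,s)f$ in $L^{q}_\sigma$ furnished by the generation theory of \cite{HR14}, combined with the uniform smoothing bound \eqref{evo-est1} and a density argument to upgrade the topology from $L^q$ to $L^r$.

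The adjoint estimates follow by duality. Writing $\|T(t,s)^*g\|_{(r,1)} = \sup\{|\langle g, T(t,s)\phi\rangle| : \|\phi\|_{(r',\infty)}\le 1\}$ via $L^{r,1}=(L^{r',\infty})^*$, the bound \eqref{evo-est2} is equivalent to the Lorentz-to-Lorentz decay $\|T(t,s)\phi\|_{(q',\infty)} \le C(t-s)^{-(3/q-3/r)/2}\|\phi\|_{(r',\infty)}$ (note $3/r'-3/q'=3/q-3/r$), which is itself obtained by interpolating the Lebesgue decay estimates on both sides with the $(\cdot,\cdot)_{\theta,\infty}$ functor. For \eqref{evo-est3} I would either invoke the adjoint gradient estimates proved directly in \cite{Hi18, Hi20}, or, equivalently, dualize the composite-operator estimate for $T(t,s)P\,\mathrm{div}$ of the type recorded in Proposition \ref{prop-evo2} through the identity $\langle \nabla T(t,s)^* g, F\rangle = -\langle g, T(t,s)P\,\mathrm{div}\, F\rangle$.

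The two borderline estimates require extra care, and \eqref{adj-int} is the main obstacle. For \eqref{evo-est12} both source and target are the critical space $L^{3,\infty}$; I would take it from \cite{Hi20, GH21} directly, or reconstruct it by interpolating the subcritical gradient bounds \eqref{evo-est11} as the exponents tend to $3$, the arbitrary $\varepsilon>0$ together with the factor $(1+t-s)^\varepsilon$ absorbing the logarithmic degeneration of the interpolation constant at the endpoint. The genuine difficulty is \eqref{adj-int}: at the critical relation $1/q-1/r=1/3$ the exponent in \eqref{evo-est3} equals $-1$, so the pointwise-in-time bound $\|\nabla T(t,s)^*g\|_{(r,1)}\le C(t-s)^{-1}\|g\|_{(q,1)}$ is only logarithmically non-integrable over $s\in(0,t)$ and direct integration fails. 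Here I would follow the device of Yamazaki \cite{Y}: the critical bound places $s\mapsto \nabla T(t,s)^*g$ in the weak space $L^{1,\infty}(0,t)$ uniformly, since $(t-s)^{-1}\in L^{1,\infty}(0,t)$, and one then performs real interpolation in the spatial exponent between two such critical endpoint estimates, with distinct pairs $(q_i,r_i)$ satisfying $1/q_i-1/r_i=1/3$ and second index $1$; the $(\cdot,\cdot)_{\theta,1}$ functor upgrades weak-$L^1$-in-time to strong $L^1$-in-time and delivers \eqref{adj-int} with a finite constant. Making this interpolation between vector-valued time-integrability classes rigorous, in particular verifying that it genuinely converts $L^{1,\infty}$ into $L^1$ in the time variable, is the step I expect to demand the most attention.
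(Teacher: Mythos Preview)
Your overall strategy---recall the Lebesgue $L^q$--$L^r$ estimates from \cite{Hi18,Hi20} and transfer them to Lorentz spaces by real interpolation and duality---is exactly what the paper does, and your treatment of \eqref{evo-est1}, \eqref{evo-est11}, \eqref{evo-est2}, \eqref{evo-est3} is fine. Your Yamazaki argument for \eqref{adj-int} is also correct; the paper simply attributes it to \cite{Y} without further comment, so this is not the ``crux'' the authors have in mind.

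Where your plan diverges from the paper is \eqref{evo-est12}, and your proposed reconstruction has a gap. You suggest interpolating the subcritical gradient bounds \eqref{evo-est11} (valid only for $1<q<r\le 3$) and letting exponents tend to $3$, with the $\varepsilon$ absorbing a ``logarithmic degeneration of the interpolation constant.'' This cannot work: to obtain $L^{3,\infty}$ as the target via \eqref{lorentz2} you must interpolate between a pair with $r_0<3<r_1$, so a purely subcritical family never reaches the endpoint, and there is in any case no logarithmic blow-up in the interpolation here. The paper's mechanism is different and relies on the \emph{diagonal} gradient estimate
\[
\|\nabla T(t,s)f\|_r\le C(t-s)^{-1/2}(1+t-s)^{\max\{0,(1-3/r)/2\}}\|f\|_r,\qquad r\in(1,\infty),
\]
from \cite{Hi20}, which carries a genuine large-time loss $(1+t-s)^{(1-3/r)/2}$ once $r>3$. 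One then interpolates between $r_0<3$ (no loss) and $r_1>3$ (with loss) using \eqref{lorentz2}; choosing $r_1$ arbitrarily close to $3$ makes the exponent of the loss as small as one wishes, which is precisely the origin of the $\varepsilon$ in \eqref{evo-est12}. So the $\varepsilon$ reflects the supercritical decay deficit of $\nabla T(t,s)$, not an interpolation artifact. Your alternative of citing \cite{Hi20,GH21} directly would of course sidestep this, but if you want to reconstruct the estimate you need the supercritical diagonal bound, not \eqref{evo-est11}.
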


{\color{black}
\begin{proof}
The only assertion that is not obvious would be \eqref{evo-est12},
in which we observe a slight loss of the rate of decay for $(t-s)\to\infty$, whereas the smoothing rate near $t=s$ is optimal.
This is, however, a direct consequence of \cite[Proposition 2.2, Theorem 2.1, Remark 2.1]{Hi20} as follows.
In fact, we know
\begin{equation}
\|\nabla T(t,s)f\|_r\leq C(t-s)^{-1/2}(1+t-s)^{\max\{0, (1-3/r)/2\}}\|f\|_r
\label{evo-est13}
\end{equation}
for all $t>s\geq 0$ and $f\in L^r_\sigma(\Omega)$ with $r\in (1,\infty)$, 
meaning that the rate of decay is given by 
$(t-s)^{-3/2r}$ when $r>3$.
This rate is less than usual, however, optimal when $\gamma=\omega=0$. 
With \eqref{evo-est13} at hand, we deduce \eqref{evo-est12} by interpolation 
when taking $q_1>3$ in \eqref{inter} as close to $q=3$
as we wish.
The proof is complete.
\end{proof}
}
\begin{proposition}
[{\cite[Proposition 3.3]{GH21}}]
Let $m\in (0,\infty)$ and assume \eqref{quan}.
Let $\frac{3}{2}<q<r\leq \infty$.
Then there is a constant $C=C(m,q,r,\Omega)>0$ such that the composite operator
$T(t,s)P\mbox{\rm div}$ extends to a bounded operator from $L^{q,\infty}(\Omega)$ 
to $L^r_\sigma(\Omega)$, $r<\infty$, and
to $L^\infty(\Omega)$
subject to
\begin{equation}
\|T(t,s)P\mbox{\rm div $F$}\|_r\leq C(t-s)^{-(3/q-3/r)/2-1/2}\|F\|_{(q,\infty)}
\label{compo-2}
\end{equation}
for all $t>s\geq 0$ and $F\in L^{q,\infty}(\Omega)$. 
If, in particular, $\frac{3}{2}<q<r<\infty$, then 
\begin{equation}
\lim_{h\to 0}\|T(t+h,s)P\mbox{\rm div $F$}-T(t,s)P\mbox{\rm div $F$}\|_r=0
\label{compo-conti}
\end{equation}
for all $t>s\geq 0$ and $F\in L^{q,\infty}(\Omega)$.

Let $\frac{3}{2}<q<r\leq 3$.
Then there is a constant $C=C(m,q,r,\Omega)>0$ such that
\begin{equation}
\|\nabla T(t,s)P\mbox{\rm div $F$}\|_r
\leq C(t-s)^{-(3/q-3/r)/2-1}\|F\|_{(q,\infty)}
\label{compo-3}
\end{equation}
for all $t>s\geq 0$ and $F\in L^{(q,\infty)}(\Omega)$.
\label{prop-evo2}
\end{proposition}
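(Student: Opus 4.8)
The plan is to derive everything from the adjoint estimates of Proposition \ref{prop-evo1} by duality, and then to propagate the bounds to the endpoint $r=\infty$ and to the gradient by splitting the interval $(s,t)$ and exploiting the evolution property $T(t,s)=T(t,\tau)T(\tau,s)$. I begin with \eqref{compo-2} for finite $r$, say $\frac32<q<r<\infty$. Since $L^{r,\infty}_\sigma(\Omega)$ is the dual of $L^{r',1}_\sigma(\Omega)$ by \eqref{duality}, I would \emph{define} the extension of $T(t,s)P\Div$ through the bounded functional
\be
\langle T(t,s)P\Div F,\varphi\rangle:=-\langle F,\nabla T(t,s)^*\varphi\rangle,\qquad \varphi\in L^{r',1}_\sigma(\Omega),
\ee
which agrees with the classical composite operator on the regular subclass $F\in C_0^\infty(\Omega)$ after an integration by parts (the projection $P$ may be dropped against the solenoidal field $T(t,s)^*\varphi$, and the boundary term vanishes because $T(t,s)^*\varphi$ meets the homogeneous boundary condition). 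Using the Lorentz--H\"older inequality $|\langle F,G\rangle|\le\|F\|_{(q,\infty)}\|G\|_{(q',1)}$ together with \eqref{evo-est3} applied with input exponent $r'$ and output exponent $q'$ (legitimate because $\frac32<q<r<\infty$ forces $1<r'\le q'\le3$), and the elementary identity $\frac{3}{r'}-\frac{3}{q'}=\frac3q-\frac3r$, I obtain the weak bound
\be
\|T(t,s)P\Div F\|_{(r,\infty)}\le C(t-s)^{-(3/q-3/r)/2-1/2}\|F\|_{(q,\infty)}.
\ee

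Next I would upgrade this weak-$L^r$ bound to the strong estimate \eqref{compo-2} by interpolating in the target only. Fixing $q$ and choosing $r_0,r_1$ with $\frac32<q<r_0<r<r_1<\infty$, the two weak bounds $M_0,M_1$ above and the reiteration identity $(L^{r_0,\infty}_\sigma,L^{r_1,\infty}_\sigma)_{\theta,r}=L^r_\sigma$ (the solenoidal analogue of \eqref{lorentz2}) yield, via the estimate $K(\tau;T(t,s)P\Div F)\le\min(M_0,\tau M_1)\|F\|_{(q,\infty)}$ for the $K$-functional of this single element, a bound with norm $\lesssim M_0^{1-\theta}M_1^\theta\|F\|_{(q,\infty)}$; a short computation shows the resulting power of $(t-s)$ is exactly $-(3/q-3/r)/2-1/2$. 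For the endpoint $r=\infty$ I would instead split at $\tau=(s+t)/2$: apply the just-proven finite-$r_0$ composite bound on $(s,\tau)$ to land in $L^{r_0}_\sigma(\Omega)$ with large finite $r_0$, then apply the smoothing estimate \eqref{evo-est1} from $L^{r_0}$ into $L^\infty$ on $(\tau,t)$. Since $t-\tau=\tau-s=(t-s)/2$, the smoothing power $-3/(2r_0)$ and the composite power $-(3/q-3/r_0)/2-1/2$ add up, the $r_0$ cancels, and one is left with $-(3/q)/2-1/2$, which is \eqref{compo-2} at $r=\infty$.

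The gradient estimate \eqref{compo-3} for $\frac32<q<r\le3$ follows from the same splitting idea, now placing the derivative on the second factor. With $\tau=(s+t)/2$ and an auxiliary exponent $q_1\in(q,r)$, I would use \eqref{compo-2} on $(s,\tau)$ to put $T(\tau,s)P\Div F\in L^{q_1,\infty}_\sigma(\Omega)$, and then the gradient smoothing \eqref{evo-est11} of the bare evolution operator on $(\tau,t)$, valid since $1<q_1<r\le3$. Again the two powers of $(t-s)/2$ combine and $q_1$ cancels, producing exactly $-(3/q-3/r)/2-1$. Finally, the continuity \eqref{compo-conti} for $\frac32<q<r<\infty$ is obtained by the same device: writing the increment as $[T(t+h,\tau)-T(t,\tau)]\,T(\tau,s)P\Div F$ with $\tau=(s+t)/2$ (so that $\tau<\min\{t,t+h\}$ for small $|h|$) and noting that $T(\tau,s)P\Div F$ lies in $L^{r_1,\infty}_\sigma(\Omega)$ for any $r_1\in(q,r)$, the strong continuity \eqref{evo-conti} of $T(\cdot,\tau)$ into $L^r_\sigma(\Omega)$ forces the increment to tend to $0$.

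The step I expect to be the main obstacle is the rigorous justification of the duality identity of the first paragraph: making sense of $P\Div F$ as a functional on $L^{r',1}_\sigma(\Omega)$, verifying that $P$ can be removed against the solenoidal adjoint field and that the boundary contribution genuinely vanishes, and---since $L^{q,\infty}(\Omega)$ is not separable and $C_0^\infty(\Omega)$ is not dense in it---defining the extension through the predual $L^{r',1}_\sigma(\Omega)$ rather than by naive density. Once this identity and the applicability of \eqref{evo-est3} with the dual exponents are secured, the remaining steps are bookkeeping with the interpolation exponents and the additivity of the decay powers under the time splitting.
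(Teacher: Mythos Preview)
Your argument is correct and, for \eqref{compo-conti} and \eqref{compo-3}, follows essentially the same midpoint--splitting idea as the paper: write $T(t,s)=T(t,\tau)T(\tau,s)$ with $\tau=(t+s)/2$, land first in a strong space via \eqref{compo-2}, and then apply either continuity or a gradient estimate of the bare evolution operator on the outer leg. The only differences are cosmetic: the paper uses the ordinary strong continuity of $T(\cdot,\tau)$ on $L^r_\sigma(\Omega)$ (since $T(\tau,s)P\Div F$ already lies in $L^r_\sigma$) rather than \eqref{evo-conti}, and for \eqref{compo-3} it invokes \eqref{evo-est13} at the same exponent $r\le 3$ instead of \eqref{evo-est11} with an auxiliary $q_1$; either pair of choices gives the same powers.

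The substantive difference is that you actually \emph{prove} \eqref{compo-2}, whereas the paper simply imports it from \cite{GH21}. Your duality route---defining $T(t,s)P\Div$ through the pairing $-\langle F,\nabla T(t,s)^*\varphi\rangle$ and bounding it by \eqref{evo-est3}, then upgrading from $L^{r,\infty}_\sigma$ to $L^r_\sigma$ by one-element interpolation between two target exponents, and finally reaching $r=\infty$ by the same midpoint split combined with \eqref{evo-est1}---is precisely the mechanism behind the cited result, so your proposal is in effect a self-contained version of what the paper outsources. The caveat you flag (justifying the duality identity on $C_0^\infty$ data: dropping $P$ against the solenoidal adjoint field, vanishing boundary term, and extending to $L^{q,\infty}$ via the predual $L^{r',1}_\sigma$ rather than by density) is exactly the point that needs care, but it is handled in \cite{GH21} and your outline of it is accurate. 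One small technical remark: the reiteration identity you use, $(L^{r_0,\infty}_\sigma,L^{r_1,\infty}_\sigma)_{\theta,r}=L^r_\sigma$, is the solenoidal analogue of \eqref{lorentz} rather than of \eqref{lorentz2} as stated; it follows from \eqref{lorentz2} together with reiteration, so the step is valid.
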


\begin{proof}
Since \eqref{compo-2} was deduced in \cite{GH21}, we verify only \eqref{compo-conti}--\eqref{compo-3}.
Let $0<|h|<\frac{t-s}{2}$, then we have $\frac{t+s}{2}<t-|h|$.
Since $T(\frac{t+s}{2},s)P\mbox{div $F$}\in L^r_\sigma(\Omega)$ for all $F\in L^{q,\infty}(\Omega)$, 
it follows from the strong continuity of $T(t,s)$ on $L^r_\sigma(\Omega)$ that
\begin{equation*}
\begin{split}
&\quad \|T(t+h,s)P\mbox{div $F$}-T(t,s)P\mbox{div $F$}\|_r  \\
&=\|\big\{T(t+h,(t+s)/2)-T(t,(t+s)/2)\big\}T((t+s)/2,s)P\mbox{div $F$}\|_r\to 0
\end{split}
\end{equation*}
as $h\to 0$,
yielding \eqref{compo-conti}.
We use the semigroup property again to find \eqref{compo-3} from \eqref{evo-est13} with $r\leq 3$ and \eqref{compo-2}.
The proof is complete.
\end{proof}

We show the estimate of the force $f$ given by
\Eqref{7}, 
in which
we need an idea to deal with the term $(\omega_0\times x)\cdot\nabla v_2$ since the pointwise decay $\nabla v_2(x)=O(|x|^{-2})$
is not available unlike \cite{GH21, Ta22} unless $\gamma_2\cdot\omega_0=0$
as well as $\omega_0\neq 0$; see Remark \ref{rem-force}.
The smallness condition \eqref{small3} below is required to establish the existence of a unique steady state $v_2$
with desired properties.
Note that the condition \eqref{small3} implies not only \eqref{data-2} via the following lemma but also \eqref{data-3}--\eqref{data-4}.
\begin{lemma}
There exists a constant $\delta^\prime=\delta^\prime(\Omega)>0$ such that if
\begin{equation}
|\gamma_2|+|\omega_0|\leq \delta^\prime,
\label{small3}
\end{equation}
then the function $f$ defined in \Eqref{7} satisfies
\[
f\in L^\infty(0,\infty; L^{3,\infty}(\Omega)). 
\]
Moreover, there exists a constant $C_0=C_0(\Omega)>0$ such that
\[
\sup_{t\geq 0}
\|f(t)\|_{(3,\infty)}\leq C_0(1+|\psi^\prime|_0)D,
\]
where $|\psi^\prime|_0$ and $D$ are respectively given by \eqref{size-psi} and \eqref{D}. 
\label{lem-force}
\end{lemma}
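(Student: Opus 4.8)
The plan is to estimate each of the terms composing $f$ in \Eqref{7} separately in the norm $\|\cdot\|_{(3,\infty)}$, uniformly in $t\ge 0$. First I would recall that under the smallness condition \eqref{small3} the steady Navier--Stokes problem \Eqref{1} for the rotating-translating body admits a unique small solution $v_2$; by the known linear theory (Farwig, Galdi--Kyed, as cited in the paper) this solution satisfies, for $|\gamma_2|+|\omega_0|$ small, estimates of the form $\|v_2\|_{(3,\infty)}\le C(|\gamma_2|+|\omega_0|)$ and $\|\nabla v_2\|_r\le C(|\gamma_2|+|\omega_0|)$ for a suitable range of $r$ (in particular $r=3/2$ and $r=2$), together with the summability $v_2\in L^{q}(\Omega)$ for $q$ in a range around $3$. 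These are exactly the ingredients behind \eqref{data-3}--\eqref{data-4}, so I would invoke them at the outset.

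The term $-\psi' v_2$ is immediate: $\|\psi' v_2\|_{(3,\infty)}\le |\psi'|_0\,\|v_2\|_{(3,\infty)}\le C|\psi'|_0(|\gamma_2|+|\omega_0|)\le C|\psi'|_0 D$, and it vanishes for $t\ge 1$. For the bracketed terms, which are multiplied by $\psi-\psi^2\in[0,1]$ and again supported in $[0,1]$, I would treat $(v_2+\gamma_1-\gamma_2)\cdot\nabla v_2$ by Hölder in Lorentz spaces: $\|v_2\cdot\nabla v_2\|_{(3,\infty)}\le C\|v_2\|_{(6,\infty)}\|\nabla v_2\|_2\le C(|\gamma_2|+|\omega_0|)^2$, and $\|(\gamma_1-\gamma_2)\cdot\nabla v_2\|_{(3,\infty)}\le C(|\gamma_1|+|\gamma_2|)\|\nabla v_2\|_{(3,\infty)}$, which is controlled by $D$ times a small factor. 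The term $\omega_0\times v_2$ gives $\|\omega_0\times v_2\|_{(3,\infty)}\le|\omega_0|\,\|v_2\|_{(3,\infty)}\le C|\omega_0|(|\gamma_2|+|\omega_0|)$, again bounded by $CD$.

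The genuinely delicate term is $(\omega_0\times x)\cdot\nabla v_2$, because the factor $\omega_0\times x$ grows linearly in $|x|$ while, as the paper emphasizes, the pointwise decay $\nabla v_2(x)=O(|x|^{-2})$ is \emph{not} available when $\gamma_2\cdot\omega_0=0$ and $\omega_0\ne 0$ (no wake gain). This is the main obstacle. The idea I would use is to avoid the pointwise gradient decay entirely and instead exploit the structure $(\omega_0\times x)\cdot\nabla v_2 = \mathrm{div}\,\big((\omega_0\times x)\otimes v_2\big) - (\mathrm{div}(\omega_0\times x))\,v_2 = \mathrm{div}\,\big((\omega_0\times x)\otimes v_2\big)$ since $\mathrm{div}(\omega_0\times x)=0$; but this merely shifts the linear growth into the argument of the divergence. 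Better: rewrite $(\omega_0\times x)\cdot\nabla v_2 = \omega_0\times v_2 + \big[(\omega_0\times x)\cdot\nabla v_2 - \omega_0\times v_2\big]$, where the bracket is, up to sign, the generator of the rotation action on $v_2$; but the decisive point is that $v_2$ solves \Eqref{1}, so $\Delta v_2 = v_2\cdot\nabla v_2 + \nabla p_2 - (\gamma_2+\omega_0\times x)\cdot\nabla v_2 + \omega_0\times v_2$, which lets me express the dangerous term as $(\omega_0\times x)\cdot\nabla v_2 = -\Delta v_2 + v_2\cdot\nabla v_2 + \nabla p_2 - \gamma_2\cdot\nabla v_2 + \omega_0\times v_2$. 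Applied inside $f$, and since in \Eqref{7} the dangerous term already appears through $v_2\cdot\nabla v_2 - (\gamma_2+\omega_0\times x)\cdot\nabla v_2 + \omega_0\times v_2$ (observe $\gamma_1-\gamma_2$ combines with the $-\gamma_2$), one sees the combination $-\nabla p_2 + \Delta v_2$ plus harmless terms; I would then bound $\Delta v_2$ and $\nabla p_2$ in $L^{3,\infty}(\Omega)$ by the steady regularity estimates (the $L^{3/2}$--$L^3$ / Lorentz variants of the Oseen and rotating-Oseen resolvent bounds of \cite{G92,F05,GK13}), each again controlled by $C(|\gamma_2|+|\omega_0|)$. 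Collecting all contributions yields $\sup_{t\ge0}\|f(t)\|_{(3,\infty)}\le C_0(1+|\psi'|_0)\big(|\gamma_1|+|\gamma_2|+|\omega_0|\big)\le C_0(1+|\psi'|_0)D$ (using $|\gamma_1|\le|\gamma_1|^{1/2}$ for $|\gamma_1|\le1$), and the membership $f\in L^\infty(0,\infty;L^{3,\infty}(\Omega))$ follows since all terms are supported in $[0,1]$ with the displayed uniform bound, completing the proof. $\qquad\Box$
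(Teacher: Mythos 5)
Your overall strategy is the right one and coincides with the paper's: the only genuinely dangerous term in \Eqref{7} is $(\omega_0\times x)\cdot\nabla v_2$, and the way to tame it is to use the steady equation \Eqref{1} to trade the combination $(\gamma_2+\omega_0\times x)\cdot\nabla v_2-\omega_0\times v_2$ for $\Delta v_2-v_2\cdot\nabla v_2-\nabla p_2$. The paper does exactly this. However, your proposal leaves the crux of the matter as an assertion: that $\Delta v_2$ and $\nabla p_2$ lie in $L^{3,\infty}(\Omega)$ (in fact in $L^q(\Omega)$ for $q$ near $3$) with norm bounded by $C(|\gamma_2|+|\omega_0|)$. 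This is not an off-the-shelf ``resolvent bound'': the a priori information coming from the existence theory of \cite{GS07} is only $\nabla v_2\in W^{1,2}(\Omega)$, $p_2\in W^{1,2}(\Omega)$ and $|v_2(x)|\le C(|\gamma_2|+|\omega_0|)|x|^{-1}$, i.e.\ $L^2$-control of $D^2v_2$ and $\nabla p_2$, which does not yield $L^3$ by interpolation. Moreover one must guard against circularity: you cannot invoke elliptic (Stokes/Oseen) regularity with $(\omega_0\times x)\cdot\nabla v_2$ placed on the right-hand side, since that is precisely the term being estimated; the $L^q$ theory must be the one for the full rotating Oseen operator, and the clean version of that theory with constants independent of the data is the whole-space result of \cite[Theorem 1.2]{GK13}. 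The paper therefore spends most of the proof on a localization: it cuts off $v_2$ and $p_2$ away from $\partial\Omega$, corrects the resulting nonzero divergence by a Bogovskii-type field, verifies that the new forcing lies in $L^q(\mathbb R^3)$ for all $q\in(2,6)$ with norm $\le C(|\gamma_2|+|\omega_0|)$, and only then applies \cite{GK13} to conclude $\|D^2 (\varphi v_2)\|_q+\|\nabla(\varphi p_2)\|_q\le C(|\gamma_2|+|\omega_0|)$, the region near the body being handled by the $W^{1,2}$ bounds. Without this step (or an equivalent exterior-domain $L^q$ estimate with the stated dependence on the data, which you would have to justify), the proof is incomplete.

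A smaller but genuine slip: the H\"older estimate $\|v_2\cdot\nabla v_2\|_{(3,\infty)}\le C\|v_2\|_{(6,\infty)}\|\nabla v_2\|_2$ has the wrong exponents, since $1/6+1/2=2/3$ places the product only in $L^{3/2,\infty}(\Omega)$. The pairing the paper uses is the pointwise decay $|v_2(x)|\le C(|\gamma_2|+|\omega_0|)|x|^{-1}$ against $\|\nabla v_2\|_6\le C\|\nabla v_2\|_{1,2}$, which lands $v_2\cdot\nabla v_2$ in $L^q(\Omega)$ for every $q\in(2,6)$, hence in $L^{3,\infty}(\Omega)$. The remaining terms ($-\psi'v_2$, $(\gamma_1-\gamma_2)\cdot\nabla v_2$, $\omega_0\times v_2$) you handle correctly.
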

\begin{proof}
With the origin of coordinates in the interior of $\mathscr B$,  set 
$$
\n{u}_1:=\sup_{x\in\Omega}\left(|x|\,|u(x)|\right)\,.
$$
We begin to recall that, under the stated assumptions, from \cite[Theorem 1 and Remark 2]{GS07} it follows that problem \Eqref{1} has a solution $(v_2,p_2)$ such that
$$
v_2\in W^{1,2}_{\rm loc}(\overline{\Omega})\cap D^{2,2}(\Omega)\cap D^{1,2}(\Omega)\,, \ \n{v_2}_1<\infty\,;\ \ p_2\in W^{1,2}(\Omega)\,.
$$ 
Furthermore, this solution satisfies
\be
\|\nabla v_2\|_{1,2}+\|p_2\|_{1,2}+\n{v_2}_1\le c_0\,(|\gamma_2|+|\omega_0|)
=:D_0\,,
\eeq{8}
where $c_0=c_0(\Omega)>0$.
Let $\varphi=\varphi(|x|)$ be a smooth cut-off function that is 0 in a neighborhood of $\partial\Omega$ and 1 for 
$|x|>R>2$.
Setting
\be
w:=\varphi\,v_2\,,\ \ \pi:=\varphi\, p_2\,,
\eeq{9}
from \Eqref{1} we deduce that
\be\left.\ba{ll}\medskip
\Delta w +(\gamma_2+\omega_0\times x)\cdot\nabla w-\omega_0\times w=\nabla \pi+ H\\
\Div w=h\ea\right\}\ \ \mbox{in $\real^3$}\,,
\eeq{10}
where
$$
H:=\varphi\, v_2\cdot\nabla v_2+(\gamma_2\cdot\nabla\varphi
+\Delta\varphi)v_2+2\nabla\varphi\cdot\nabla v_2-p_2\nabla\varphi\,,\ \ h:=\nabla\varphi\cdot v_2\,,
$$
and we used the property $\omega_0\times x\cdot\nabla\varphi=0$. Let $ z$ be a solution to the problem
\be\ba{ll}\medskip
\Div z=h\ \ \mbox{in $\Omega_R:=\{x\in\Omega:\,|x|<R\}$}\,,\\  z\in W^{2,q}_0(\Omega)\,, \ q\in (1,6]\,,\ \ 
\|z\|_{2,q}\le c\,\|h\|_{1,q}\,.
\ea
\eeq{11}
From \Eqref{8}, the properties of $\varphi$ and elementary embedding theorems it follows that $h\in W^{1,q}_0(\Omega_R)$, and that
\be
\|h\|_{1,q}\le c\,D_0
. 
\eeq{12}
Moreover,
$$
\int_{\Omega_R}h=\int_{\Omega_R}\Div(\varphi\, v_2)=\int_{\{|x|=R\}} v_2\cdot\nu=0\,,
$$
so that problem \Eqref{11} is solvable with a constant $c=c(\Omega,q)>0$; see, e.g., \cite[Theorem III.3.3]{G-b}.
Thus, setting
\be
{\sf w}:= w-z\,,
\eeq{13}
from \Eqref{10} and \Eqref{11} we infer
\be\left.\ba{ll}\medskip
\Delta{\sf w} +(\gamma_2+\omega_0\times x)\cdot\nabla{\sf w}-\omega_0\times{\sf w}=\nabla \pi+ G\\
\Div{\sf w}=0\ea\right\}\ \ \mbox{in $\real^3$}\,,
\eeq{14}
where
$$
G:=H-\Delta z -(\gamma_2+\omega_0\times x)\cdot\nabla z+\omega_0\times z\,,
$$
We next observe that, by \Eqref{8},  we have 
\be
\|v_2\cdot\nabla v_2\|_q\le c\,D_0\left(\int_\Omega|x|^{-\frac{6q}{6-q}}\right)^{\frac{6-q}{6q}}
\|\nabla v_2\|_6\le c_1\,D_0\,,\ \ \mbox{for all $q\in (2,6)$}\,,
\eeq{14_0}
with $c_1=c_1(\Omega,q)>0$. Therefore, again by \Eqref{8},  embedding theorems, \Eqref{11}$_3$ and \Eqref{12} we show
\be
\|G\|_{\color{black}q,\mathbb R^3}\le c\,D_0\,,\ \ \mbox{for all $q\in(2,6)$}\,.
\eeq{15}
As a consequence,  by \cite[Theorem 1.2]{GK13} and \Eqref{8}, it follows, in particular, that
$$
\|D^2{\sf w}\|_{q,\mathbb R^3}+\|\nabla \pi\|_{q,\mathbb R^3}\le c\,D_0\,,\ \ \mbox{for all $q\in(2,6)$}\,.
$$
Combining the latter with \Eqref{11}$_3$--\Eqref{13}, \Eqref{14}$_1$
and \Eqref{15} furnishes 
$$
\|(\gamma_2+\omega_0\times x)\cdot\nabla w-\omega_0\times w\|_{q}\le c\,D_0\,, \ \ \mbox{for all $q\in (2,6)$}\,.
$$
Because of \Eqref{9} and the properties of $\varphi$, this implies
$$
\|(\gamma_2+\omega_0\times x)\cdot\nabla v_2-\omega_0\times v_2\|_{q,\{|x|>R\}}\le c\,D_0\,, \ \ \mbox{for all $q\in (2,6)$}\,.
$$
However, by \Eqref{8}, we also have
$$
\|(\gamma_2+\omega_0\times x)\cdot\nabla v_2-\omega_0\times v_2\|_{q,\{x\in\Omega:|x|<R\}}\le c\,D_0\,, \ \ \mbox{for all $q\in (2,6)$}
$$
so that we conclude
\be
\|(\gamma_2+\omega_0\times x)\cdot\nabla v_2-\omega_0\times v_2\|_{q}\le c\,D_0\,, \ \ \mbox{for all $q\in (2,6)$}\,.
\eeq{16}
Furthermore,  by \Eqref{8},  embedding, and the property of $\psi$ 
\be
\|\gamma_1\cdot\nabla v_2\|_q\le c\, |\gamma_1| D_0\,, \ \ \mbox{for all $q\in (2,6)$}\,.
\eeq{17}
Finally, again by \Eqref{8}, we get
\be
\| v_2\|_{(3,\infty)}\le c\,D_0\,.
\eeq{18}
Thus, collecting 
\Eqref{14_0}, \Eqref{16}--\Eqref{18} and taking again into account the property of $\psi$, we complete the proof of the  lemma. 
\end{proof}
\begin{remark}
Let us consider the general case in which the initial state $v_1$ is generated by the angular velocity $\omega_1$ as well as
the translation $\gamma_1$.
Then the force $f$ in \Eqref{7} involves
\[
(\psi-\psi^2)\big[(\omega_1\times x)\cdot\nabla v_2-\omega_1\times v_2\big]
\]
as well.
The problematic term is, in fact, $(\omega_1\times x)\cdot \nabla v_2$, for which the argument in the proof of Lemma \ref{lem-force}
cannot work because $v_2$ is
not, in general, related to $\omega_1$. However, there are special cases where this term can be handled. First, when $\omega_1=\kappa\, \omega_0$, for some $\kappa\in \real$. Second,
if $\omega_1\neq \kappa\,\omega_0$, but $\omega_0 \neq 0$ and $\gamma_2\cdot\omega_0=0$,
then one can fortunately deduce 
$(\omega_1\times x)\cdot\nabla v_2\in L^{3,\infty}(\Omega)$ from the pointwise decay $\nabla v_2(x)=O(|x|^{-2})$;
in contrast with {this case},
if $v_2$ possesses the wake structure, the transition problem is still out of reach.
\label{rem-force}
\end{remark}

\section{Proof of Theorem \ref{main}}
\label{proof}

In this section we will show the main theorem.
Let $q\in [3,\infty)$.
For a strongly measurable function $w: (0,\infty)\to L^{q,\infty}_\sigma(\Omega)$ and $t\in (0,\infty)$, we set
\begin{equation}
[w]_{q,t}:=\mbox{esssup}_{s\in (0,t)}\,s^{1/2-3/2q}\|w(s)\|_{(q,\infty)}, \qquad
[w]_q:=\sup_{t>0}\,[w]_{q,t}.
\label{beha}
\end{equation}
The following lemma provides us with several properties of
\begin{equation}
(\Psi w)(t):=\int_0^tT(t,s)P\mbox{div $(Fw)$}(s)\,ds.
\label{Psi}
\end{equation}
\begin{lemma}
Let \eqref{small3} be satisfied.
Let $m\in (0,\infty)$ and assume \eqref{quan}.
Suppose that
\[
w\in L^\infty_{\rm loc}(0,\infty;
L^{3,\infty}_\sigma(\Omega)\cap L^{q,\infty}_\sigma(\Omega))
\]
with some $q\in (3,\infty)$ and that
\[
[w]_{3,t}+[w]_{q,t}<\infty
\]
for every $t>0$.
Then we have the following.

\begin{enumerate}
\item
The integral \eqref{Psi} is  Bochner integrable in $L^3_\sigma(\Omega)$ and in $L^q_\sigma(\Omega)$, so that
the function $(\Psi w)(t)$ is well-defined subject to
\begin{equation}
\sup_{s\in (0,t)}\|(\Psi w)(s)\|_3\leq
CD[w]_{q,t}+C[w]_{3,t}[w]_{q,t}
\label{3-est}
\end{equation}
for all $t>0$ with some $C=C(m,q,\Omega)>0$ independent of $t$, where $D$ is given by \eqref{D}. 
Moreover, we have
\begin{equation}
\Psi w\in C((0,\infty); L^3_\sigma(\Omega)).
\label{Psi-conti1}
\end{equation}

\item
We have
\begin{equation}
[\Psi w]_{q,t}\leq
CD[w]_{q,t}+C[w]_{3,t}[w]_{q,t}
\label{q-est}
\end{equation}
for all $t>0$ with some $C=C(m,q,\Omega)>0$ independent of $t$.

\item
If, in particular, $q\in (6,\infty)$, then the integral \eqref{Psi} is the Bochner integrable in $L^\infty(\Omega)$ as well as in
$L^r_\sigma(\Omega)$ for every $r\in (3,\infty)$.
Moreover, we have
\begin{equation}
\Psi w\in C((0,\infty); L^r_\sigma(\Omega)), \qquad 
\Psi w\in C_{w^*}((0,\infty); L^\infty(\Omega))
\label{Psi-conti2}
\end{equation}
for every $r\in (3,\infty)$.

\item
If, in particular, 
\[
[w]_3+[w]_q<\infty
\]
for some $q\in (6,\infty)$, then
\begin{equation}
\|(\Psi w)(t)\|_\infty=O(t^{-1/2+3/2q}),
\label{decay-inf}
\end{equation}
\begin{equation}
\|(\Psi w)(t)\|_q=O(t^{-1/2+3/2q}\log t),
\label{decay-q}
\end{equation}
as $t\to\infty$.

\item
In addition to the assumption of the 
item 4, suppose that
\[
\nabla w\in L^\infty_{\rm loc}(0,\infty; L^{3,\infty}(\Omega))
\]
subject to
\begin{equation}
\{\nabla w\}_\varepsilon:=\mbox{\rm esssup}_{t>0}\, t^{1/2}(1+t)^{-\varepsilon} \|\nabla w(t)\|_{(3,\infty)}<\infty
\label{grad-ep}
\end{equation}
for some $\varepsilon>0$.
Then there is a constant $C=C(m,q,\Omega)>0$ independent of $t$ such that
\begin{equation}
\nabla (\Psi w)\in C_w((0,\infty); L^3(\Omega)),
\label{grad-conti}
\end{equation}
\begin{equation}
\|\nabla (\Psi w)(t)\|_3\leq C\Big[D\big([w]_q+\{\nabla w\}_\varepsilon\big)+[w]_q\{\nabla w\}_\varepsilon\Big]
t^{-1/2}(1+t)^{\max\{\varepsilon,\,3/2q\}}, 
\label{grad-est}
\end{equation}
for all $t>0$, 
where $D$ is given by \eqref{D}.
\end{enumerate}
\label{lem-Psi}
\end{lemma}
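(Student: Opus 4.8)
\textbf{Proof plan for Lemma \ref{lem-Psi}.}

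The plan is to estimate the Duhamel-type integral \eqref{Psi} term by term using the structure $Fw = w\otimes w + w\otimes(b+\psi v_2) + (b+\psi v_2)\otimes w$, so that $\|(Fw)(s)\|_{(q,\infty)}$ splits into a quadratic piece controlled by $\|w(s)\|_{(r,\infty)}^2$-type products (via Hölder in Lorentz spaces) and a linear piece controlled by $\|b+\psi v_2\|_{(r,\infty)}\|w(s)\|_{(r',\infty)}$, the coefficient norms being bounded by $CD$ through \eqref{data-3}. For item 1 I would apply \eqref{compo-2} of Proposition \ref{prop-evo2} with target $L^3$: writing $Fw\in L^{p,\infty}$ for a suitable $p\in(3/2,3)$ determined by the weights $[w]_{3,t},[w]_{q,t}$, the kernel bound $(t-s)^{-(3/p-1)/2-1/2}$ is integrable near $s=t$ because $3/p-1<1$, and near $s=0$ the factors $s^{-(1/2-3/2q)}$ (and $s^{-1/2}$ from the linear term) are integrable since the relevant exponents are $<1$; tracking the exact powers gives \eqref{3-est}. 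The Bochner integrability in $L^3_\sigma$ and $L^q_\sigma$ follows once the scalar integral of the norm bound is finite, and \eqref{Psi-conti1} comes from \eqref{compo-conti} together with a standard splitting of the integral $\int_0^t = \int_0^{t-\eta} + \int_{t-\eta}^t$ and dominated convergence.

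For item 2 the argument is the same but with $L^r$ replaced by $L^{(q,\infty)}$ as the target: one uses the first (non-gradient) half of \eqref{compo-2}, or rather interpolates to land in the Lorentz space $L^{q,\infty}$, multiplies by the weight $t^{1/2-3/2q}$, and checks that the resulting $s$-integral is bounded uniformly in $t$ by a Beta-function computation — the two exponents to balance are the smoothing exponent and the weight exponent, and the borderline cases are handled by the strict inequalities $q>3$ and $p<3$. Item 3 is the same estimate pushed to $r=\infty$ and general $r\in(3,\infty)$, legitimate once $q>6$ so that the quadratic term $w\otimes w\in L^{q/2,\infty}$ with $q/2>3$ still satisfies $q/2>3/2$; the continuity statements \eqref{Psi-conti2} again use \eqref{compo-conti} for $r<\infty$ and weak-$*$ continuity (testing against $L^1$, using density and the uniform $L^\infty$ bound) for $r=\infty$. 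Item 4 is the large-time decay: here one splits $\int_0^t = \int_0^{t/2}+\int_{t/2}^t$, on $[0,t/2]$ uses $(t-s)\sim t$ to pull out the decay factor $t^{-(3/p-3/r)/2-1/2}$ and integrates the (now globally finite, since $[w]_3,[w]_q<\infty$) weights, and on $[t/2,t]$ uses the local smoothing plus the decay of $[w]$ on that interval; the logarithm in \eqref{decay-q} appears exactly at the endpoint $r=q$ where the two contributions are of the same order and the $s$-integral over $[t/2,t]$ of $(t-s)^{-1/2}s^{-1/2}\cdot(\text{const})$ against a borderline weight produces $\log t$.

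For item 5, the gradient estimate, I would use \eqref{compo-3} of Proposition \ref{prop-evo2} (target $L^3$, so $r=3$ there) to get $\|\nabla T(t,s)P\,\mathrm{div}(Fw)(s)\|_3 \le C(t-s)^{-(3/p-1)/2-1}\|(Fw)(s)\|_{(p,\infty)}$; the new feature is the extra $(t-s)^{-1}$, which near $s=t$ forces one to spend the $s$-regularity of $Fw$ — but $\nabla(Fw)$ involves $\nabla w\cdot(w,b,\psi v_2) + (w,b,\psi v_2)\cdot\nabla w + w\cdot\nabla(b+\psi v_2)$, and here \eqref{grad-ep} (bounding $\|\nabla w(s)\|_{(3,\infty)}$) together with \eqref{data-4} and $[w]_q$ gives the needed $L^{p,\infty}$-control of $\nabla(Fw)(s)$ with an $s$-weight $s^{-1/2-\alpha}(1+s)^\varepsilon$; then I integrate by moving one derivative off the kernel, i.e. use $\nabla T(t,s)P\,\mathrm{div}\,G = T(t,s)P\,\mathrm{div}\,(\nabla G)$ informally, more precisely apply \eqref{compo-2} to $\nabla(Fw)$. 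The split $\int_0^{t/2}+\int_{t/2}^t$ again yields the stated $t^{-1/2}(1+t)^{\max\{\varepsilon,3/2q\}}$ profile, and \eqref{grad-conti} follows from \eqref{compo-conti} plus the dominated-convergence splitting as before. \textbf{The main obstacle} is bookkeeping the weighted $s$-integrals so that every exponent stays strictly below the integrability threshold simultaneously near $s=0$, near $s=t$, and at spatial infinity — in particular choosing the auxiliary exponent $p$ (hence the interpolation between $L^3$ and $L^q$ for $w$) correctly for each of the quadratic and linear pieces, and isolating the single borderline case $r=q$ where the logarithm is unavoidable; the gradient term in item 5 is the most delicate because the $(t-s)^{-1}$ factor leaves almost no room and one must genuinely use the decay hypothesis \eqref{grad-ep} on $\nabla w$ rather than just interpolation from $[w]_3,[w]_q$.
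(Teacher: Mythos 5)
Your overall strategy (Duhamel integral, splitting $Fw$ into quadratic and linear pieces, weak H\"older, the smoothing estimates of Proposition \ref{prop-evo2}, and Beta-function bookkeeping) matches the paper for item 1, and your treatment of items 3 and 4 is close in spirit to the paper's (the paper splits at $t-1$ rather than $t/2$ for the linear piece, but that is a detail). However, there is a genuine gap in your item 2. The direct estimate you propose — land in $L^{q,\infty}$, multiply by $t^{1/2-3/2q}$, and close by a Beta-function computation — fails for the linear terms $w\otimes(b+\psi v_2)$ and $(b+\psi v_2)\otimes w$, because the coefficient $b+\psi v_2$ is only controlled in the scale-critical space $L^{3,\infty}(\Omega)$ (see \eqref{data-3}). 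If you place it in $L^{3,\infty}$, the kernel exponent in \eqref{compo-2} with source $L^{p,\infty}$, $1/p=1/q+1/3$, and target $L^q$ becomes $(t-s)^{-1}$, which is not integrable at $s=t$; if instead you place it in $L^{r,\infty}$ with $r>3$ to tame the singularity, the Beta integral produces $t^{3/2q-3/2r}$, and after multiplying by $t^{1/2-3/2q}$ you are left with $t^{1/2-3/2r}\to\infty$. This is exactly why the paper records the "not useful" bound \eqref{Psi-q} and then proves \eqref{q-est} by an entirely different mechanism: Yamazaki's duality trick. One splits $\langle(\Psi w)(t),\phi\rangle$ at $t/2$, integrates by parts in the second half to put the derivative on the adjoint, and invokes the $L^1$-in-time bound \eqref{adj-int} for $\|\nabla T(t,s)^*\phi\|_{(r,1)}$ at the critical exponent pair $1/q+1/r=2/3$. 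Your claim that "the borderline cases are handled by the strict inequalities $q>3$ and $p<3$" is where the argument breaks; the borderline is real and cannot be removed by exponent-chasing. The same duality device is also what the paper uses for the weak-$*$ continuity in $L^\infty$ in item 3.

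A secondary issue is in item 5: the identity $\nabla T(t,s)P\,\mathrm{div}\,G=T(t,s)P\,\mathrm{div}(\nabla G)$ is not valid (and applying \eqref{compo-2} to $\nabla(Fw)$ would produce a second-order object, not $\nabla(\Psi w)$). What the paper actually does is exploit $\mathrm{div}\,w=0$ to rewrite $\mathrm{div}(F_kw)$ as first-order products such as $w\cdot\nabla w$, $w\cdot\nabla(b+\psi v_2)$, and then apply the gradient estimate \eqref{evo-est11} for $\nabla T(t,s)f$ with $f$ in $L^{p,\infty}$ near $s=t$ (this is where \eqref{grad-ep} and \eqref{data-4} enter), while keeping the divergence form and using \eqref{compo-3} on $[0,t-1]$ where the stronger $(t-s)^{-1-3/2q}$ singularity is harmless. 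Your instinct to "spend the regularity of $Fw$" near $s=t$ is the right one, but the mechanism needs to be the gradient bound on the evolution operator itself, not a commutation of $\nabla$ with $T(t,s)P\,\mathrm{div}$.
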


\begin{proof}
As described just before Lemma \ref{lem-force}, the condition \eqref{small3} allows us to use \eqref{data-3}
{\color{black}
--\eqref{data-4}.
}
It follows from \eqref{compo-2} and the weak H\"older inequality that
\[
\|T(t,s)P\mbox{div $(Fw)$}(s)\|_p\leq C(t-s)^{-3/2q-1/2}\|w\|_{(q,\infty)}\big(\|w\|_{(p,\infty)}+\|b+\psi v_2\|_{(p,\infty)}\big)
\]
for each $p\in \{3,q\}$, yielding the Bochner integrability in both spaces $L^3_\sigma(\Omega)$ and $L^q_\sigma(\Omega)$.
By \eqref{data-3}
the case $p=3$ immediately leads to \eqref{3-est}, while estimate for the other case $p=q$ yields 
\begin{equation}
\|(\Psi w)(t)\|_q
\leq C[w]_{q,t}\big([w]_{q,t}\,t^{-1/2+3/2q}+D\big)
\label{Psi-q}
\end{equation}
for all $t>0$.

Since \eqref{Psi-q} itself is not useful, let us show \eqref{q-est} instead.
To this end, we make use of the Lorentz space
as in \cite{GH21, HS09, Ta22}, where the idea is due to \cite{Y}.
Consider
\begin{equation*}
\begin{split}
\langle (\Psi w)(t),\phi\rangle
&=\int_0^{t/2}\langle (T(t,s)P\mbox{div $(Fw)$}(s),\phi\rangle\,ds
-\int_{t/2}^t\langle (Fw)(s), \nabla T(t,s)^*\phi\rangle\,ds  \\
&=:I+II
\end{split}
\end{equation*}
for $\phi\in L^{q^\prime,1}_\sigma(\Omega)$.
We apply \eqref{adj-int} to
\begin{equation*}
\begin{split}
II&\leq \int_{t/2}^t\|w\|_{(q,\infty)}\big(\|w\|_{(3,\infty)}+\|b+\psi v_2\|_{(3,\infty)}\big)\|\nabla T(t,s)^*\phi\|_{(r,1)}\,ds  \\
&\leq Ct^{-1/2+3/2q}[w]_{q,t}\big([w]_{3,t}+D\big)\int_{t/2}^t\|\nabla T(t,s)^*\phi\|_{(r,1)}\,ds,
\end{split}
\end{equation*}
where $r\in (\frac{3}{2},3)$ is determined by $1/q+1/r=2/3$, whereas we still use \eqref{compo-2} to infer
\[
I\leq C\int_0^{t/2}(t-s)^{-1}\|w\|_{(q,\infty)}\big(\|w\|_{(3,\infty)}+\|b+\psi v_2\|_{(3,\infty)}\big)\|\phi\|_{(q^\prime,1)}\,ds,
\]
where $1/q^\prime+1/q=1$.
In this way, we are led to \eqref{q-est} by duality.

Using \eqref{compo-conti}, we are able to show \eqref{Psi-conti1} in the same way as in \cite[Lemma 4.6]{Ta22}.
Since the right continuity is easier, let us discuss merely the left continuity.
We fix $t>0$, take $h>0$ small enough and 
consider
\begin{equation*}
\begin{split}
&\quad (\Psi w)(t-h)-(\Psi w)(t)  \\
&=\int_0^{t-h}\Big(T(t-h,s)P\mbox{div $(Fw)$}(s)-T(t,s)P\mbox{div $(Fw)$}(s)\Big)\,ds \\
&\quad +\int_{t-h}^t T(t,s)P\mbox{div $(Fw)$}(s)\,ds=:III+IV.
\end{split}
\end{equation*}
Given $\varepsilon>0$ arbitrarily, we choose $\delta\in (0,\frac{t}{2})$
such that
$\int_0^\delta \tau^{-3/2q-1/2}\,d\tau\leq\varepsilon$ and then split $III$ into
\begin{equation}
\begin{split}
\|III\|_3
&=\left\|\int_0^{t-\delta}+\int_{t-\delta}^{t-h}\right\|_3  \\
&\leq \int_0^{t-\delta}\|T(t-h,s)P\mbox{div $(Fw)$}(s)-T(t,s)P\mbox{div $(Fw)$}(s)\|_3\,ds  \\
&\quad +C\varepsilon\,[w]_{q,t}\big([w]_{3,t}+D\big)t^{-1/2+3/2q},
\end{split}
\label{3-conti}
\end{equation}
where $0<h<\frac{\delta}{2}<\frac{t}{4}$.
Since the first term goes to zero as $h\to 0$ by virtue of \eqref{compo-conti} and by the Lebesgue convergence theorem,
we furnish $\lim_{h\to 0}\|III\|_3=0$.
It is readily seen from \eqref{compo-2} that $IV$ is even H\"older continuous.

Assume $q\in (6,\infty)$, then we utilize \eqref{compo-2} with $r=\infty$ to find
\[
\|T(t,s)P\mbox{div $(Fw)$}(s)\|_\infty
\leq C(t-s)^{-3/q-1/2}\|w\|_{(q,\infty)}\big(\|w\|_{(q,\infty)}+\|b+\psi v_2\|_{(q,\infty)}\big)
\]
that implies the Bochner integrability in $L^\infty(\Omega)$ 
along with
\begin{equation}
\|(\Psi w)(t)\|_\infty\leq C[w]_{q,t}\big([w]_{q,t}\,t^{-1/2}+Dt^{-3/2q}\big)
\label{Psi-infty}
\end{equation}
for all $t>0$ and, thereby, it is the Bochner integrable in $L^r_\sigma(\Omega)$ with $r\in (3,\infty)$ as well
since so is in $L^3_\sigma(\Omega)$.
The proof of \eqref{Psi-conti2} for $r<\infty$ is essentially the same as the one for \eqref{Psi-conti1},
where the right-hand side of \eqref{3-conti} is replaced by
\[
\int_0^{t-\delta}\|\cdots\|_r\,ds+C\varepsilon\, [w]_{q,t}\big([w]_{q,t}\,t^{-1+3/q}+Dt^{-1/2+3/2q}\big)
\]
with a slight change of choice of $\delta$ accordingly
for the proof of the left continuity.
As to the remaining case $r=\infty$, it suffices to show
\begin{equation}
\lim_{h\to 0}\langle (\Psi w)(t+h)-(\Psi w)(t),\phi\rangle=0
\label{weak-conti}
\end{equation}
for every $\phi\in C_0^\infty(\Omega)$ since we have \eqref{Psi-infty}.
To consider the right continuity, let $h>0$ and let $r\in (\frac{3}{2},2)$ fulfill $1/q+1/r=2/3$.
Then, by the backward semigroup property of $T(t,s)^*$, \eqref{evo-est3} and \eqref{compo-2},
we have
\begin{equation*}
\begin{split}
&\quad |\langle (\Psi w)(t+h)-(\Psi w)(t),\phi\rangle|  \\
&\leq \int_0^t|\langle (Fw)(s), \nabla \big\{T(t+h,s)^*-T(t,s)^*\big\}P\phi\rangle|\,ds \\
&\quad +\int_t^{t+h}|\langle T(t+h,s)P\mbox{div $(Fw)$}(s),\phi\rangle|\,ds  \\
&\leq C[w]_{q,t}\big([w]_{3,t}+D\big)\int_0^ts^{-1/2+3/2q}
\|\nabla T(t,s)^*\big\{T(t+h,t)^*P\phi-P\phi\big\}\|_{(r,1)}\,ds  \\
&\quad +C[w]_{q,t+1}\big([w]_{3,t+1}+D\big)\int_t^{t+h}(t+h-s)^{-1/2-3/2q}s^{-1/2+3/2q}\|P\phi\|_{(3/2,1)}\,ds \\
&\leq C[w]_{q,t+1}\big([w]_{3,t+1}+D\big)
\Big(\|T(t+h,t)^*P\phi-P\phi\|_{(3/2,1)}+t^{-1/2+3/2q}h^{1/2-3/2q}\|P\phi\|_{(3/2,1)}\Big)
\end{split}
\end{equation*}
which goes to zero as $h\to 0$ for every $\phi\in C_0^\infty(\Omega)$.
Since the left continuity is discussed in an analogous way, we are led to \eqref{weak-conti}.

We next
show the large time behavior \eqref{decay-inf}--\eqref{decay-q} by following \cite{GH21}.
We already know from the first term of the right-hand side of \eqref{Psi-q} and \eqref{Psi-infty}, respectively, that
\[
\left\|\int_0^t T(t,s)P\mbox{div}(w\otimes w)(s)\,ds\right\|_r
\leq C[w]_{q}^2\,t^{-1/2+3/2r}, \qquad r\in \{q,\infty\},
\]
provided $[w]_q<\infty$.
We split the other part of $(\Psi w)(t)$ into
\[
\int_0^t T(t,s)P\mbox{div}\big[w\otimes (b+\psi v_2)+(b+\psi v_2)\otimes w\big](s)\,ds
=\int_0^{t-1}+\int_{t-1}^t=:V+VI
\]
for $t>2$ and make use of \eqref{compo-2} with $r=\infty$ to estimate each of them.
Then we have
\begin{equation}
\begin{split}
\|V\|_\infty
&\leq C\int_0^{t-1}(t-s)^{-1-3/2q}\|w\|_{(q,\infty)}
\|b+\psi v_2\|_{(3,\infty)}\,ds  \\
&\leq CD[w]_q
\left(t^{-1-3/2q}\int_0^{t/2}s^{-1/2+3/2q}\,ds+t^{-1/2+3/2q}\int_{t/2}^{t-1}(t-s)^{-1-3/2q}\,ds\right)  \\
&\leq CD[w]_q
\big(t^{-1/2}+t^{-1/2+3/2q}\big),
\end{split}
\label{decay-key}
\end{equation}
while
\begin{equation*}
\begin{split}
\|VI\|_\infty
&\leq C\int_{t-1}^t(t-s)^{-3/q-1/2}\|w\|_{(q,\infty)}
\|b+\psi v_2\|_{(q,\infty)}\,ds  \\
&\leq CD[w]_q\,t^{-1/2+3/2q}.
\end{split}
\end{equation*}
Analogous computations yield
\begin{equation*}
\begin{split}
\|V\|_q
&\leq C\int_0^{t-1}(t-s)^{-1}\|w\|_{(q,\infty)}
\|b+\psi v_2\|_{(3,\infty)}\,ds  \\
&\leq CD[w]_q\,t^{-1/2+3/2q}(1+\log t)
\end{split}
\end{equation*}
and
\begin{equation*}
\|VI\|_q\leq CD[w]_q\,t^{-1/2+3/2q},
\end{equation*}
which imply \eqref{decay-q}.

Finally, let us discuss $\nabla (\Psi w)(t)$.
The continuity \eqref{Psi-conti1} of $\Psi w$ immediately implies that
\[
\langle \nabla (\Psi w)(t+h)-\nabla (\Psi w)(t), \varphi\rangle
=-\langle (\Psi w)(t+h)-(\Psi w)(t), P\mbox{div $\varphi$}\rangle\to 0\qquad (h\to 0)
\]
for all $\varphi\in C_0^\infty(\Omega)^{3\times 3}$; thus, once we have the $L^3$-estimate \eqref{grad-est},
we are led to \eqref{grad-conti}.
To show \eqref{grad-est}, we have to derive estimate of each of three terms
\[
(\Psi w)(t)=(\Psi_1w)(t)+(\Psi_2w)(t)+(\Psi_3w)(t), \qquad
(\Psi_kw)(t):=\int_0^tT(t,s)P\mbox{div $(F_kw)$}(s)\,ds,
\]
where
\[
F_1w=w\otimes w, \qquad F_2w=w\otimes (b+\psi v_2), \qquad F_3w=(b+\psi v_2)\otimes w.
\]
We use \eqref{evo-est11} to get
\begin{equation}
\|\nabla(\Psi_1w)(t)\|_3\leq C\int_0^t(t-s)^{-1/2-3/2q}\|w\|_{(q,\infty)}\|\nabla w\|_{(3,\infty)}\,ds
\leq C[w]_q\{\nabla w\}_\varepsilon\, t^{-1/2}(1+t)^\varepsilon
\label{grad-1}
\end{equation}
for all $t>0$.
We fix $p\in (3,\infty)$ and recall \eqref{data-3}--\eqref{data-4} to infer
\begin{equation}
\|\nabla(\Psi_2w)(t)\|_3
\leq C\int_0^t(t-s)^{-1/2-3/2p}\|b+\psi v_2\|_{(p,\infty)}\|\nabla w\|_{(3,\infty)}\,ds
\leq CD\{\nabla w\}_\varepsilon\, t^{-3/2p},
\label{grad-2}
\end{equation}
\begin{equation}
\|\nabla(\Psi_3w)(t)\|_3
\leq C\int_0^t(t-s)^{-1/2-3/2q}\|w\|_{(q,\infty)}\|\nabla(b+\psi v_2)\|_3\,ds
\leq CD[w]_q,
\label{grad-3}
\end{equation}
for all $t\in (0,2]$, where we see less singularity near $t=0$ than \eqref{grad-1}.
Let $t>2$, and split $\nabla(\Psi_kw)(t)$, $k=2,3$, into
\[
\nabla(\Psi_kw)(t)=\int_0^{t-1}+\int_{t-1}^t=:J_{k1}+J_{k2}.
\]
We still deal with $J_{k1}$, $k=2,3$, as the divergence form and employ \eqref{compo-3} to find
\begin{equation}
\|J_{21}\|_3+\|J_{31}\|_3
\leq C\int_0^{t-1}(t-s)^{-1-3/2q}\|w\|_{(q,\infty)}\|b+\psi v_2\|_{(3,\infty)}\,ds
\leq CD[w]_q\big(t^{-1/2}+t^{-1/2+3/2q}\big)
\label{grad-4}
\end{equation}
for all $t>2$, which is exactly the same as in \eqref{decay-key}.
As for $J_{k2}$, $k=2,3$, we estimate them, respectively, as in \eqref{grad-2}--\eqref{grad-3} to obtain 
\begin{equation}
\|J_{22}\|_3
\leq C\int_{t-1}^t(t-s)^{-1/2-3/2p}\|b+\psi v_2\|_{(p,\infty)}\|\nabla w\|_{(3,\infty)}\,ds
\leq CD\{\nabla w\}_\varepsilon\, t^{-1/2+\varepsilon},
\label{grad-5}
\end{equation}
\begin{equation}
\|J_{32}\|_3
\leq C\int_{t-1}^t(t-s)^{-1/2-3/2q}\|w\|_{(q,\infty)}\|\nabla (b+\psi v_2)\|_3\,ds
\leq CD[w]_q\,t^{-1/2+3/2q},
\label{grad-6}
\end{equation}
for all $t>2$.
Collecting \eqref{grad-1}--\eqref{grad-6}, we conclude \eqref{grad-est}.
The proof is complete.
\end{proof}

Set
\begin{equation}
w_0(t):=T(t,0)v_0+w_1(t), \qquad
w_1(t):=\int_0^tT(t,s)Pg(s)\,ds.
\label{top}
\end{equation}
Then, we readily observe the following properties of $w_0(t)$.
\begin{lemma}
Let $\delta^{\prime\prime}=\delta^{\prime\prime}(\Omega)\in (0,\delta^\prime]$ be a constant such that the condition
$|\gamma_1|^{1/2}\leq \delta^{\prime\prime}$ leads to the existence of a unique steady state $v_1$ satisfying
$\|v_1\|_3\leq C|\gamma_1|^{1/2}$,
where $\delta^\prime$ is the constant given in Lemma \ref{lem-force}.
Assume that
$D\leq \delta^{\prime\prime}\leq \delta^\prime$,
which implies all of \eqref{data-1}--\eqref{data-4},
where
$D$ is given by \eqref{D}.
Let $m\in (0,\infty)$ and assume \eqref{quan}.
Then we have
\begin{equation}
w_0\in C_{w^*}((0,\infty); L^{3,\infty}_\sigma(\Omega)), \qquad
\nabla w_0\in C_{w^*}((0,\infty); L^{3,\infty}(\Omega)),
\label{top-conti-1}
\end{equation}
\begin{equation}
w_0\in C((0,\infty); L^q_\sigma(\Omega))\cap C_{w^*}((0,\infty); L^\infty(\Omega)),
\label{top-conti-2}
\end{equation}
\begin{equation}
\lim_{t\to 0}\|w_0(t)-v_0\|_{(3,\infty)}=0, \qquad
\lim_{t\to 0}\,[w_0]_{q,t}=0,
\label{top-ic}
\end{equation}
\begin{equation}
[w_0]_3+\sup_{t>0} t^{1/2-3/2q}\|w_0(t)\|_q
\leq c(1+|\psi^\prime|_0)D,
\label{top-est}
\end{equation}
\begin{equation}
\|w_0(t)\|_\infty=O(t^{-1/2})\qquad (t\to\infty),
\label{top-decay}
\end{equation}
for every $q\in (3,\infty)$
with some $c=c(m,q,\Omega)>0$.
Moreover, given $\varepsilon>0$ arbitrarily, there is a constant $C=C(\varepsilon,m,\Omega)>0$ such that
\begin{equation}
\{\nabla w_0\}_\varepsilon
\leq C(1+|\psi^\prime|_0)D.
\label{top-grad}
\end{equation}
Here, $[\,\cdot\,]_{q,t},\, [\,\cdot\,]_q$ and $\{\nabla(\cdot)\}_\varepsilon$ are defined in \eqref{beha} and \eqref{grad-ep}.
\label{lem-top}
\end{lemma}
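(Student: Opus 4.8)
The plan is to exploit the decomposition $w_0=T(\cdot,0)v_0+w_1$ from \eqref{top} together with the structural fact that $g(s)=0$ for $s\geq 1$, which forces $w_1(t)=T(t,1)w_1(1)$ and hence $w_0(t)=T(t,1)w_0(1)$ for all $t\geq 1$, with $w_0(1)=T(1,0)v_0+w_1(1)$. By \eqref{data-1} one has $\|v_0\|_3\leq CD$, and \eqref{data-2} (which rests on Lemma \ref{lem-force}) gives $\sup_{t\geq 0}\|g(t)\|_{(3,\infty)}\leq C(1+|\psi^\prime|_0)D$. Combining these with the uniform bound $\|T(t,s)f\|_{(3,\infty)}\leq C\|f\|_{(3,\infty)}$ contained in \eqref{evo-est1} yields $\sup_{t>0}\|w_1(t)\|_{(3,\infty)}\leq C\int_0^{\min(t,1)}\|g(s)\|_{(3,\infty)}\,ds\leq C(1+|\psi^\prime|_0)D$ and likewise $\|w_0(1)\|_{(3,\infty)}\leq C(1+|\psi^\prime|_0)D$; in particular $[w_0]_3\leq C(1+|\psi^\prime|_0)D$, which is the first half of \eqref{top-est}.

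For the remaining quantitative bounds I would split $(0,\infty)$ into $(0,2]$ and $[2,\infty)$ and set $\alpha:=\frac12-\frac{3}{2q}\in(0,\frac12)$ for $q\in(3,\infty)$ (with $\alpha=\frac12$ when $q=\infty$). On $(0,2]$ the norms $\|w_0(t)\|_q$, $\|w_0(t)\|_\infty$ and $\|\nabla w_0(t)\|_{(3,\infty)}$ are estimated directly from $w_0(t)=T(t,0)v_0+\int_0^tT(t,s)Pg(s)\,ds$ using the smoothing estimates $\|T(t,s)f\|_q\leq C(t-s)^{-\alpha}\|f\|_{(3,\infty)}$ from \eqref{evo-est1} and \eqref{evo-est12} on each factor; the time integrals $\int_0^{\min(t,1)}(t-s)^{-\alpha}\,ds$ and $\int_0^{\min(t,1)}(t-s)^{-1/2}(1+t-s)^\varepsilon\,ds$ converge and remain bounded on $(0,2]$, giving there $t^{1/2-3/2q}\|w_0(t)\|_q$, $\|w_0(t)\|_\infty$ and $t^{1/2}(1+t)^{-\varepsilon}\|\nabla w_0(t)\|_{(3,\infty)}$ all $\leq C(1+|\psi^\prime|_0)D$. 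On $[2,\infty)$ I would instead use $w_0(t)=T(t,1)w_0(1)$ with $\|w_0(1)\|_{(3,\infty)}\leq C(1+|\psi^\prime|_0)D$ and apply the same decay estimates from $s=1$, using $t-1\geq t/2$ to replace $(t-1)^{-\alpha}$ by a constant times $t^{-\alpha}$; this produces $\|w_0(t)\|_q=O(t^{-1/2+3/2q})$, $\|w_0(t)\|_\infty=O(t^{-1/2})$ (that is, \eqref{top-decay}), and $t^{1/2}(1+t)^{-\varepsilon}\|\nabla w_0(t)\|_{(3,\infty)}\leq C(1+|\psi^\prime|_0)D$. Together with the first paragraph these give \eqref{top-est}, \eqref{top-decay} and \eqref{top-grad}; note that, unlike the nonlinear term of Lemma \ref{lem-Psi}, the $L^\infty$ rate carries no $\varepsilon$-loss precisely because $g$ is compactly supported in time.

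The initial conditions \eqref{top-ic} are where the wake structure of the initial steady state is used essentially: since $\|v_1\|_3<\infty$, the field $v_0=v_1-b(\cdot,0)$ lies in $L^3_\sigma(\Omega)$, so strong continuity of the evolution operator on $L^3_\sigma(\Omega)$ gives $\|T(t,0)v_0-v_0\|_3\to 0$, hence $\|T(t,0)v_0-v_0\|_{(3,\infty)}\to 0$; combined with $\|w_1(t)\|_{(3,\infty)}\leq C(1+|\psi^\prime|_0)D\,t\to 0$ this yields the first limit. For the second, $[w_0]_{q,t}\to 0$, the contribution of $w_1$ is bounded by $C(1+|\psi^\prime|_0)D\,t\to 0$ as before, while for $T(\cdot,0)v_0$ a density argument is needed: picking $v_0^\eta\in C^\infty_{0,\sigma}(\Omega)$ with $v_0^\eta\to v_0$ in $L^3$, strong continuity on $L^q_\sigma(\Omega)$ gives $s^{1/2-3/2q}\|T(s,0)v_0^\eta\|_q\to 0$, whereas $s^{1/2-3/2q}\|T(s,0)(v_0-v_0^\eta)\|_q\leq C\|v_0-v_0^\eta\|_3$ by \eqref{evo-est1}; letting $\eta\to 0$ gives $\lim_{s\to 0}s^{1/2-3/2q}\|T(s,0)v_0\|_q=0$.

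Finally, the continuity statements \eqref{top-conti-1}--\eqref{top-conti-2} I would obtain along the lines of \eqref{Psi-conti1}--\eqref{Psi-conti2} in Lemma \ref{lem-Psi}: $T(\cdot,0)v_0\in C([0,\infty);L^3_\sigma(\Omega))\cap C((0,\infty);L^q_\sigma(\Omega))$ by strong continuity together with the smoothing of $T(t_0,0)$ into $L^q_\sigma(\Omega)$ at each $t_0>0$, and $w_1\in C((0,\infty);L^q_\sigma(\Omega))$ for every $q\in(3,\infty)$ by splitting $w_1(t+h)-w_1(t)$ into an integral over $(0,t-\delta)$ handled by \eqref{evo-conti} and dominated convergence, a tail over $(t-\delta,t)$ of size $O(\int_0^\delta\tau^{-\alpha}\,d\tau)$, and a piece over $(t,t+h)$ which is zero for $t\geq 1$ and $O(h^{1-\alpha})$ for $t<1$. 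The weak-$*$ statements in $L^{3,\infty}_\sigma(\Omega)$, in $L^\infty(\Omega)$, and (for $\nabla w_0$) in $L^{3,\infty}(\Omega)$ then follow from this strong $L^q_\sigma(\Omega)$-continuity and the local-in-$t$ bounds already established, by pairing with a dense subclass such as $C_0^\infty(\Omega)$ and invoking the uniform estimates $|\langle w_0(t),\phi\rangle|\leq\|w_0(t)\|_X\|\phi\|_{X^*}$; for $\nabla w_0$ one uses $\langle\nabla w_0(t),\varphi\rangle=-\langle w_0(t),\mbox{div}\,\varphi\rangle$ with $\varphi\in C_0^\infty(\Omega)$. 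I expect the genuine work to be confined to the bookkeeping near the transition time $t=1$ and to matching the small-time (direct-integral) and large-time (evolution property from $s=1$) regimes so that the prescribed time weights come out correctly; conceptually, the only nontrivial input is the membership $v_0\in L^3_\sigma(\Omega)$, i.e. the wake structure of $v_1$, which underpins both the strong convergence in \eqref{top-ic} and the vanishing of $[w_0]_{q,t}$ as $t\to 0$.
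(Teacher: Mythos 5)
Your proposal is correct and follows essentially the same route as the paper: both rest on the decomposition \eqref{top}, the bounds \eqref{data-1}--\eqref{data-2} fed into \eqref{evo-est1} and \eqref{evo-est12} (with the integral over $s$ confined to $(0,\min\{1,t\})$ since $g$ vanishes for $t\ge 1$), and the same duality/density scheme for the weak-$*$ continuity statements. The only cosmetic differences are that you invoke the evolution property $w_0(t)=T(t,1)w_0(1)$ for $t\ge 2$ where the paper simply uses $t-s\ge t/2$ inside the Duhamel integral, and that you supply a density argument for $\lim_{t\to 0}t^{1/2-3/2q}\|T(t,0)v_0\|_q=0$ where the paper cites \cite[(2.21)]{AH}.
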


\begin{proof}
Let 
$q\in (3,\infty)$ and $r\in \{q,\infty\}$.
By \eqref{data-2}, \eqref{evo-est1}
and \eqref{evo-est12}
we see that
\[
\|w_1(t)\|_{(3,\infty)}\leq C(1+|\psi^\prime|_0)D\min\{1,t\}
\]
and that
\begin{equation}
\begin{split}
\|w_1(t)\|_r
&\leq C\int_0^{\min\{1,t\}}(t-s)^{-1/2+3/2r}\|g(s)\|_{(3,\infty)}\,ds   \\
&\leq C(1+|\psi^\prime|_0)D
\left\{
\begin{array}{ll}
t^{1/2+3/2r}\quad &(t<2), \\
t^{-1/2+3/2r}&(t\geq 2),
\end{array}
\right.
\end{split}
\label{w1-est}
\end{equation}
as well as
\begin{equation*}
\begin{split}
\|\nabla w_1(t)\|_{(3,\infty)}
&\leq C\int_0^{\min\{1,t\}}(t-s)^{-1/2}(1+t-s)^\varepsilon \|g(s)\|_{(3,\infty)}\,ds   \\
&\leq C(1+|\psi^\prime|_0)D
\left\{
\begin{array}{ll}
t^{1/2}\quad &(t<2), \\
t^{-1/2+\varepsilon}&(t\geq 2).
\end{array}
\right.
\end{split}
\end{equation*}
These together with \eqref{data-1}, \eqref{evo-est1},
\eqref{evo-est13} and
\[
\lim_{t\to 0}\|T(t,0)v_0-v_0\|_3=0, \qquad \lim_{t\to 0}t^{1/2-3/2r}\|T(t,0)v_0\|_r=0,
\]
the latter of which is a specific case of 
\cite[(2.21)]{AH}, imply \eqref{top-ic}--\eqref{top-grad}.

Using \eqref{evo-conti}, we see the strong continuity with values in $L^q_\sigma(\Omega)$ described in \eqref{top-conti-2}
along the same manner as in the proof of \eqref{Psi-conti1}.
With the aid of \eqref{evo-est2} in the analogous way to the proof of \eqref{weak-conti},
we also infer
\begin{equation}
|\langle w_0(t+h)-w_0(t), \phi\rangle|
\leq C(1+|\psi^\prime|_0)D\Big((1+t)\|T(t+h,t)^*P\phi-P\phi\|_{(3/2,1)}+h\|P\phi\|_{(3/2,1)}\Big)
\label{weak-conti2}
\end{equation}
for all $\phi\in C_0^\infty(\Omega)^3$
and $0<t<t+h$, where \eqref{data-1} and \eqref{data-2} are used.
Since estimate for the other case $0<t-h<t$ is similar and since we have \eqref{w1-est} with $r=\infty$
(and, therefore, estimate of $\|w_0(t)\|_\infty$),
we are led to the weakly-$*$ continuity with values in $L^\infty(\Omega)$ described in \eqref{top-conti-2}.
Finally, $\mbox{\eqref{top-conti-1}}_1$ follows from
\eqref{weak-conti2} with $\phi\in L^{3/2,1}_\sigma(\Omega)$
and, thereby, we have
\[
\langle \nabla w_0(t+h)-\nabla w_0(t), \varphi\rangle
=-\langle w_0(t+h)-w_0(t), P\mbox{div $\varphi$}\rangle\to 0\qquad (h\to 0)
\]
for all $\varphi\in C_0^\infty(\Omega)^{3\times 3}$, which leads to $\mbox{\eqref{top-conti-1}}_2$ on account of \eqref{top-grad}.
The proof is complete.
\end{proof}
\begin{remark}
It seems to be difficult to show $w_1\in C((0,\infty); L^{3,\infty}_\sigma(\Omega))$ better than \eqref{top-conti-1}
because the evolution operator $T(t,s)$ is not of parabolic type (unless the rotation is absent)
and because $C^\infty_{0,\sigma}(\Omega)$ is not dense in $L^{3,\infty}_\sigma(\Omega)$.
\label{rem-conti}
\end{remark}

Before the proof of the main part of Theorem \ref{main},
let us show the uniqueness of solutions independently of the existence result.
\begin{lemma}
Suppose that $w_1$ and $w_2$ are global solutions to \eqref{IE} in the sense of Definition \ref{def-sol}
and that 
\[
w_1,\, w_2\in L^\infty_{\rm loc}(0,\infty; L^{r,\infty}_\sigma(\Omega))
\]
with some $r\in (3,\infty)$.
There is a constant $\widetilde\delta=\widetilde\delta(\Omega)\in (0,1]$ 
independent of $r\in (3,\infty)$ and $|\psi^\prime|_0$
such that if
\[
D\leq \frac{\widetilde\delta}{1+|\psi^\prime|_0}
\]
as well as $D\leq \delta^{\prime\prime}$,
then $w_1=w_2$,
where $D$ is given by $\eqref{D}$ and $\delta^{\prime\prime}$ is the constant given in Lemma \ref{lem-top}.
\label{lem-uni}
\end{lemma}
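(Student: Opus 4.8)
The plan is to show that the difference $u:=w_1-w_2$ satisfies a linear integral equation with small coefficients and then run a Gronwall-type / fixed-point contraction argument on each finite time interval. Since both $w_i$ solve \eqref{IE} with the same data $v_0$ and $g$, the two leading terms $T(t,0)v_0$ and $\int_0^t T(t,s)Pg(s)\,ds$ cancel, leaving
\[
u(t)=\int_0^t T(t,s)P\,\mbox{div}\,\big(F w_1-F w_2\big)(s)\,ds .
\]
Using the definition of $Fw$ in \eqref{reduced}, the bracket is bilinear and reads $w_1\otimes w_1-w_2\otimes w_2 + u\otimes(b+\psi v_2)+(b+\psi v_2)\otimes u$, which rearranges into $u\otimes w_1 + w_2\otimes u + u\otimes(b+\psi v_2)+(b+\psi v_2)\otimes u$. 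Thus $u$ is a fixed point of a linear map built from $\Psi$-type integrals whose ``coefficient'' is $w_1$, $w_2$ and $b+\psi v_2$, the last having norm $\le CD$ by \eqref{data-3}.

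First I would fix an arbitrary $T_0>0$ and work on $(0,T_0)$. The key is that, since $w_i\in L^\infty_{\rm loc}(0,\infty;L^{r,\infty}_\sigma(\Omega))$ and both satisfy the strong initial condition \eqref{IC} (hence $[w_i]_{3,t}\to 0$ and an analogous quantity in the $L^{r,\infty}$-norm tends to $0$ as $t\to 0$), one has $\mbox{esssup}_{0<s<\tau}\,s^{1/2-3/2r}\|w_i(s)\|_{(r,\infty)}=:N_i(\tau)<\infty$ with $N_i(\tau)\to 0$ as $\tau\to 0$. One then estimates, exactly as in the proof of Lemma \ref{lem-Psi} (items 1 and the $L^{r,\infty}$ bound obtained by the Yamazaki duality trick via \eqref{adj-int} and \eqref{compo-2}), the quantities $\sup_{0<s<\tau}\|u(s)\|_3$ and $M(\tau):=\mbox{esssup}_{0<s<\tau}\,s^{1/2-3/2r}\|u(s)\|_{(r,\infty)}$. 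The bilinear structure gives
\[
M(\tau)\le C\big(D+N_1(\tau)+N_2(\tau)\big)\,M(\tau),
\]
with $C=C(\Omega)$ (note the constants in \eqref{adj-int}, \eqref{compo-2} depend only on $m$, which we may take $=\delta''\le 1$, hence only on $\Omega$ — this is why $\widetilde\delta$ is independent of $r$ and $|\psi'|_0$). Choosing $\widetilde\delta$ so that $C\widetilde\delta\le 1/4$ handles the $D$-term; then, by smallness of $N_i(\tau)$ for $\tau$ small, $C(D+N_1(\tau)+N_2(\tau))\le 1/2$ on some interval $(0,\tau_0)$, forcing $M(\tau_0)=0$, i.e. $u\equiv 0$ on $(0,\tau_0)$.

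To propagate uniqueness past $\tau_0$ up to $T_0$ (and hence to all of $(0,\infty)$), I would use the evolution property: on $[\tau_0,T_0]$ both $w_i$ solve the integral equation with the \emph{same} datum $w_1(\tau_0)=w_2(\tau_0)$ at the starting time $\tau_0$, and now the relevant norms $\mbox{esssup}_{s\in(\tau_0,\tau_0+h)}\|w_i(s)\|_{(r,\infty)}$ are merely bounded (not small) but the time increment $h$ can be taken small, so the bilinear estimates pick up an extra positive power of $h$ (from $\int_{\tau_0}^{\tau_0+h}(t-s)^{-1/2-3/2r}\,ds$-type bounds) and the map is again a contraction on short intervals. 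A standard continuation argument then covers $[\tau_0,T_0]$, and since $T_0$ is arbitrary we get $w_1=w_2$ on $(0,\infty)$. The main obstacle is the first, small-time step: making the duality/Lorentz-space estimates of the $\Psi$-type integral (in particular the delicate $L^{r,\infty}$ bound via \eqref{adj-int}) with a constant genuinely independent of $r$ and of $|\psi'|_0$, which is what is needed for $\widetilde\delta$ to have the claimed uniformity — but this is already essentially contained in the proof of Lemma \ref{lem-Psi}, so it is a matter of bookkeeping rather than a new idea.
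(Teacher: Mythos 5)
Your overall architecture (subtract the two solutions so the $v_0$- and $g$-terms cancel, obtain a bilinear integral equation for $u=w_1-w_2$, kill $u$ on a small initial interval, then propagate with a uniform step size using the local $L^{r,\infty}$ bound and the short-time smallness of $\int(t-s)^{-1/2-3/2r}ds$) is exactly the paper's. The propagation step is fine. The genuine gap is in the small-time step. You run the contraction on $M(\tau):=\mathrm{esssup}_{0<s<\tau}\,s^{1/2-3/2r}\|u(s)\|_{(r,\infty)}$ and claim that $N_i(\tau):=\mathrm{esssup}_{0<s<\tau}\,s^{1/2-3/2r}\|w_i(s)\|_{(r,\infty)}$ is finite and tends to $0$ as $\tau\to 0$. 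Neither assertion follows from the hypotheses: $w_i\in L^\infty_{\rm loc}(0,\infty;L^{r,\infty}_\sigma(\Omega))$ gives boundedness only on compact subsets of the \emph{open} half-line and says nothing about the behavior of $\|w_i(s)\|_{(r,\infty)}$ as $s\to 0$, while the initial condition \eqref{IC} is only in $L^{3,\infty}$. In particular $M(\tau)$ may be $+\infty$, and then $M\le\theta M$ with $\theta<1$ does not yield $M=0$. (A smaller inaccuracy: $[w_i]_{3,t}$ does not tend to $0$ but to $\|v_0\|_{(3,\infty)}$; this quantity is merely small, of order $CD$, which is why the smallness of $\widetilde\delta$ is needed at all.) A further consequence of working in $L^{r,\infty}$ in the first step is that the constants in \eqref{adj-int} and \eqref{compo-2} depend on the exponents, so your $C$ would depend on $r$, undermining the claimed $r$-independence of $\widetilde\delta$.

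The paper avoids all of this by running the first step entirely at the $L^{3,\infty}$ level: testing $u(t)$ against $\phi\in L^{3/2,1}_\sigma(\Omega)$, bounding $\|(Gu)(s)\|_{(3/2,\infty)}$ by $\|u\|_{(3,\infty)}(\|w_1\|_{(3,\infty)}+\|w_2\|_{(3,\infty)}+D)$, and integrating the adjoint gradient with \eqref{adj-int} at the fixed exponents $(q,r)=(3/2,3)$. This gives $[u]_{3,t}\le C[u]_{3,t}([w_1]_{3,t}+[w_2]_{3,t}+D)$ with $C=C(\Omega)$ (after fixing $m=1$), and the needed a priori finiteness and smallness of $[w_j]_{3,t_0}\le\|v_0\|_{(3,\infty)}+\varepsilon\le CD+\varepsilon$ come from Definition \ref{def-sol} and \eqref{data-1}, with no reference to $r$. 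The $L^{r,\infty}$ hypothesis is invoked only after $t_0$, where local boundedness is actually available. Your write-up should be restructured along these lines; as it stands, the crucial first step does not go through.
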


\begin{proof}
Since $D\leq \delta^{\prime\prime}$, we have \eqref{data-1} and \eqref{data-3}.
Assume in addition that
$D\leq\frac{1}{1+|\psi^\prime|_0}$, and let us employ Propositions \ref{prop-evo1} and \ref{prop-evo2} for $m=1$, see \eqref{quan}.
We follow the argument of \cite{FuK64, Ta22}.
Set $w=w_1-w_2$, then it obeys
\begin{equation}
w(t)=\int_0^tT(t,s)P\mbox{div $(Gw)$}(s)\,ds
\label{duha-G}
\end{equation}
with
\begin{equation}
Gw=w\otimes w_1+w_2\otimes w+w\otimes (b+\psi v_2)+(b+\psi v_2)\otimes w.
\label{G}
\end{equation}
For every $\phi\in L^{3/2,1}_\sigma(\Omega)$, we use \eqref{adj-int} together with \eqref{data-3} to find
\begin{equation*}
\begin{split}
|\langle w(t),\phi\rangle|
&\leq \int_0^t\|(Gw)(s)\|_{(3/2,\infty)}\|\nabla T(t,s)^*\phi\|_{(3,1)}\,ds  \\
&\leq C[w]_{3,t}\big([w_1]_{3,t}+[w_2]_{3,t}+D\big)\|\phi\|_{(3/2,1)}
\end{split}
\end{equation*}
for all $t>0$.
Given $\varepsilon >0$ arbitrarily, it follows from \eqref{IC} together with \eqref{data-1} that there is $t_0>0$ satisfying
\[
[w_j]_{3,t_0}\leq \|v_0\|_{(3,\infty)}+\varepsilon\leq C|\gamma_1|^{1/2}+\varepsilon, \qquad j\in \{1,2\}.
\]
Thus, there is a constant $\widetilde\delta=\widetilde\delta(\Omega)\in (0,1]$ such that if
$D<\widetilde\delta$, we deduce
$[w]_{3,t_0}=0$ by duality when taking $\varepsilon>0$ small enough. 

Given ${\mathcal T}\in (0,\infty)$ arbitrarily,
we next show the existence of $\tau_*=\tau_*({\mathcal T})>0$ independent of $\tau\in [t_0,{\mathcal T})$ with the following property:
if $w=0$ on $[0,\tau)$, then $w=0$ holds true on $[0,\tau+\tau_*)$ as long as $\tau+\tau_*< {\mathcal T}$
(otherwise, $w=0$ on $[0,{\mathcal T})$).
We then employ this with $\tau=t_0, t_0+\tau_*, t_0+2\tau_*,\cdots$ to accomplish the proof of $w=0$ on $[0,{\mathcal T})$.
Set $M_j:=\mbox{esssup}_{t\in (t_0,{\mathcal T})}\|w_j(t)\|_{(r,\infty)}$ for $j\in \{1,2\}$, then we see from \eqref{data-3} that
\begin{equation*}
\begin{split}
\|w(t)\|_{(3,\infty)} 
&\leq C\int_\tau^t (t-s)^{-1/2-3/2r}\|w\|_{(3,\infty)}\big(\|w_1\|_{(r,\infty)}+\|w_2\|_{(r,\infty)}+\|b+\psi v_2\|_{(r,\infty)}\big)\,ds  \\
&\leq C(M_1+M_2+D)(t-\tau)^{1/2-3/2r}\mbox{esssup}_{s\in (\tau,t)}\|w(s)\|_{(3,\infty)}
\end{split}
\end{equation*}
which gives us the desired $\tau_*$.
The proof is complete.
\end{proof}

We close the paper with the proof of Theorem \ref{main}.

\medskip
\noindent
{\em Proof of Theorem \ref{main}}.
Let $D\leq \delta^{\prime\prime}$, where $\delta^{\prime\prime}$ is the constant given in Lemma \ref{lem-top}.
We assume \eqref{quan} with $m=1$, and make use of 
Lemma \ref{lem-Psi} (with $q$ specified below) and Lemma \ref{lem-top} for $m=1$.
Both conditions follow from \eqref{small} with $\delta\leq\min\{\delta^{\prime\prime},1\}$.
Given $\varepsilon \in (0,\frac{1}{4})$, we set $q=\frac{3}{2\varepsilon}\in (6,\infty)$ and define the function space $X_q$ by
\begin{equation*}
\begin{split}
X_q:=
&\Big\{w\in C_{w^*}((0,\infty); L^{3,\infty}_\sigma(\Omega)\cap L^{q,\infty}_\sigma(\Omega));\;
\nabla w\in C_{w^*}((0,\infty); L^{3,\infty}(\Omega)),\;  \\
&\qquad\qquad\qquad\qquad\qquad
\lim_{t\to 0}\,[w]_{q,t}=0,\;
[w]_3+[w]_q+\{\nabla w\}_\varepsilon <\infty
\Big\}
\end{split}
\end{equation*}
which is a Banach space endowed with norm
$\|w\|_{X_q}:=[w]_3+[w]_q+\{\nabla w\}_\varepsilon$, where $[\,\cdot\,]_{q,t},\, [\,\cdot\,]_q$ and $\{\nabla(\cdot)\}_\varepsilon$
are defined in \eqref{beha} and \eqref{grad-ep}.

From \eqref{3-est}--\eqref{Psi-conti2},
\eqref{grad-conti}--\eqref{grad-est},
\eqref{top-conti-1}--\eqref{top-est}
and \eqref{top-grad}
it follows that $w\in X_q$ implies 
\[
\Phi w:=w_0+\Psi w\in X_q
\]
along with
\begin{equation} 
\|\Phi w\|_{X_q}\leq c_0(1+|\psi^\prime|_0)D+c_1D\|w\|_{X_q}+c_2\|w\|_{X_q}^2.
\label{into}
\end{equation}
Exactly the same computations for \eqref{duha-G}--\eqref{G}
as in the proof of \eqref{3-est}, \eqref{q-est}
and \eqref{grad-est}
lead us to
\[
\|\Phi w_1-\Phi w_2\|_{X_q}\leq \big(c_1D+c_2\|w_1\|_{X_q}+c_2\|w_2\|_{X_q}\big)\|w_1-w_2\|_{X_q}
\]
for all $w_1,\, w_2\in X_q$ with the same constants $c_1$ and $c_2$ as in \eqref{into}.
We thus find that the map $\Phi$ has a fixed point $w$ being in the ball of $X_q$ with radius, say, $2c_0(1+|\psi^\prime|_0)D$
provided that $(1+|\psi^\prime|_0)D$ is further small enough.
In this way we obtain a solution $w(t)$ to \eqref{IE} and, by Lemma \ref{lem-uni},
it is the only solution under the additional smallness \eqref{small2}
in the sense of the description of the latter part of Theorem \ref{main}.

Finally, the large time behavior \eqref{decay} and further continuity \eqref{sol-cl2} as well as \eqref{IC}
immediately follow from \eqref{3-est}, \eqref{Psi-conti2}--\eqref{decay-q} and \eqref{top-conti-2}--\eqref{top-decay}.
We have completed the proof Theorem \ref{main}.
\hfill
$\Box$

\medskip
\noindent
{\bf Acknowledgements.} The work of G.P.~Galdi is partially supported by National Science Foundation Grant DMS-2307811.
The work of T.~Hishida is partially supported by the Grant-in-aid for Scientific Research 22K03372 from JSPS. 

\medskip
\noindent
\centerline{\bf Declaration.}
\begin{itemize}
\item
The authors have no competing interests to declare that are relevant to the content of this
article.
\item Data sharing not applicable to this article as no datasets were generated or analyzed during
the current study.

\end{itemize}

\begin{flushleft}
Giovanni P. Galdi \\
Department of Mechanical Engineering and Materials Science \\
University of Pittsburgh \\
Pittsburgh PA15261 \\
USA \\
e-mail: galdi@pitt.edu \\
$\,$ \\
Toshiaki Hishida \\
Graduate School of Mathematics \\
Nagoya University \\
Nagoya 464-8602 \\
Japan \\
e-mail: hishida@math.nagoya-u.ac.jp
\end{flushleft}

\end{document}